\definecolor{green}{rgb}{0,0.8,0} 
\newtheorem{theorem}{Theorem}[section]
\newtheorem{lemma}[theorem]{Lemma}
\theoremstyle{definition}
\newtheorem{definition}[theorem]{Definition}
\newtheorem*{remark}{Remark}
\numberwithin{equation}{section}
\newcommand{\nrm}[1]{\Vert#1\Vert}
\newcommand{\abs}[1]{\vert#1\vert}
\newcommand{\brk}[1]{\langle#1\rangle}
\newcommand{\set}[1]{\{#1\}}
\renewcommand{\Im}{\mathrm{Im}}
\newcommand{\aleq}{\lesssim}
\newcommand{\lap}{\triangle}
\newcommand{\ud}{\mathrm{d}}
\newcommand{\rd}{\partial}
\newcommand{\nb}{\nabla}
\newcommand{\bb}{\Big}
\newcommand{\alp}{\alpha}
\newcommand{\bt}{\beta}
\newcommand{\gmm}{\gamma}
\newcommand{\dlt}{\delta}
\newcommand{\eps}{\epsilon}
\newcommand{\kpp}{\kappa}
\newcommand{\lmb}{\lambda}
\newcommand{\sgm}{\sigma}
\newcommand{\bfA}{{\bf A}}
\newcommand{\bfD}{{\bf D}}
\newcommand{\bfI}{{\bf I}}
\newcommand{\bbC}{\mathbb C}
\newcommand{\bbR}{\mathbb R}
\newcommand{\calB}{\mathcal B}
\newcommand{\calD}{\mathcal D}
\newcommand{\calE}{\mathcal E}
\newcommand{\calF}{\mathcal F}
\newcommand{\calL}{\mathcal L}
\newcommand{\calS}{\mathcal S}
\newcommand{\Fleq}{\preceq}				
\newcommand{\Fgeq}{\succeq}				
\newcommand{\sptF}[1]{\calF \left[ #1 \right]} 	
\newcommand{\Id}{\bfI_{2 \times 2}}			
\newcommand{\Dirac}{\calD}				
\newcommand{\covD}{\bfD}				
\newcommand{\cf}{\mathrm{cf}}			
\newcommand{\df}{\mathrm{df}}			
\newcommand{\mR}{\mathfrak{R}}			
\newcommand{\err}{\calE}			
\newcommand{\phihom}{\phi^{\mathrm{hom}}}	
\newcommand{\Ahom}{A^{\mathrm{hom}}}		
\newcommand{\psihom}{\psi^{\mathrm{hom}}}	
\newcommand{\nf}{\mathcal{Q}}			
\newcommand{\vect}[2]{\left( \begin{array}{c} #1 \\ #2 \end{array} \right)}				
\newcommand{\matr}[4]{\left( \begin{array}{cc} #1 & #2 \\ #3 & #4 \end{array} \right)}		
\newcommand{\curl}{\mathrm{curl}\,}			
\renewcommand{\div}{\mathrm{div}\,}			
\begin{document}

\title[]{Low regularity solutions to the Chern-Simons-Dirac and the Chern-Simons-Higgs equations in the Lorenz gauge}
\author{Hyungjin Huh}%
\address{Department of Mathematics, Chung-Ang University, Seoul, 156-756, Republic of Korea}%
\email{huh@cau.ac.kr}%

\author{Sung-Jin Oh}%
\address{Department of Mathematics, Princeton University, Princeton, New Jersey, 08544}%
\email{sungoh@math.princeton.edu}%


\begin{abstract}
In this paper, we address the problem of local well-posedness of the Chern-Simons-Dirac (CSD) and the Chern-Simons-Higgs (CSH) equations in the Lorenz gauge for low regularity initial data.  One of our main contributions is the uncovering of a null structure of (CSD). Combined with the standard machinery of $X^{s,b}$ spaces, we obtain local well-posedness of (CSD) for initial data $a_{\mu}, \psi \in H^{1/4+\eps}_{x}$. 
Moreover, it is observed that the same techniques applied to (CSH) lead to a quick proof of local-wellposedness for initial data $a_{\mu} \in H^{1/4+\eps}_{x}$, $(\phi, \rd_{t} \phi) \in H^{3/4+\eps}_{x} \times H^{-1/4 + \eps}_{x}$, which improves the previous result of \cite{Selberg:2012vb}.
\end{abstract}
\maketitle

\section{Introduction}

In this paper, we continue to study the initial value problem for the  \emph{Chern-Simons-Dirac} \eqref{eq:CSD:original}  equations
 \begin{equation} \label{eq:CSD:original} \tag{CSD}
 \left\{
\begin{aligned}
	& F_{\mu \nu} = - \frac{2}{\kpp} \eps_{\mu \nu \lmb} (\overline{\psi} \gmm^{\lmb} \psi), \\
	& i \gmm^{\mu} \covD_{\mu} \psi - m \psi = 0,
\end{aligned}
\right.
\end{equation}
and the \emph{Chern-Simons-Higgs} \eqref{eq:CSH:original} equations
\begin{equation} \label{eq:CSH:original} \tag{CSH}
	\left\{
	\begin{aligned}
	& F_{\mu \nu} = \frac{2}{\kpp} \eps_{\mu \nu \lmb} \Im (\overline{\phi} \covD^{\lmb} \phi), \\
	& \covD^{\mu} \covD_{\mu } \phi = 0,
	\end{aligned}
	\right.
\end{equation}
on the Minkowski space $\bbR^{1+2}$. We remark that, for the sake of simplicity,  the potential term $\phi V'(\abs{\phi}^{2})$ on the right-hand side of the Higgs equation\footnote{ We remark that a polynomial-type potential $V(r)$, with an appropriate restriction on the degree, can also be incorporated in our main theorem for \eqref{eq:CSH:original} (Theorem \ref{thm:lwp4CSH}), using the techniques of \cite{Selberg:2012vb}.} is omitted. 

Here, the 1-form $A_{\mu}$ is to be interpreted as defining a connection; accordingly, $F := \ud A$ is the curvature 2-form, and $\covD_{\mu}:=\rd_{\mu} - i A_{\mu}$ is the associated covariant derivative. The spinor field $\psi$ is represented by a column vector with 2 complex components. The notation $\psi^{\dag}$ refers to the complex conjugate transpose of $\psi$, whereas $\overline{\psi}$ is defined by $\overline{\psi}=\psi^{\dag}\gamma^0$. For the definition of the gamma matrices $\gmm_{\mu}$, we refer the reader to \S \ref{subsec:prelim:dirac}. The totally skew-symmetric tensor $\eps_{\mu \nu \lmb}$ is characterized by $\epsilon_{012}=1$, $\kappa>0$ is the Chern-Simons coupling constant and the nonnegative constant $m$ is the mass of the spinor field $\psi$. The scalar field $\phi$ is a complex-valued function. 

We will work on the Minkowski space $\bbR^{1+2}$ equipped with the Minkowski metric of signature $(+ - -)$. Greek indices, such as $\mu, \nu$, will refer to all indices $0,1,2$, whereas latin indices, such as $i, j, k, \ell$, will refer only to the spatial indices $1,2$, unless otherwise specified. We will use the Minkowski metric to raise and lower the indices. Moreover, we will adopt the Einstein summation convention of summing up repeated upper and lower indices.

The \eqref{eq:CSD:original} and \eqref{eq:CSH:original} systems  were introduced in \cite{CKP, LB} and  \cite{HKP, JW} respectively to deal with the
electromagnetic phenomena in planar domains such as fractional quantum Hall effect or high temperature super conductivity. We refer to \cite{Dun, H} for more physical information.

The \eqref{eq:CSD:original} system exhibits the \emph{conservation of charge}\footnote{We remark that \eqref{eq:CSD:original} also possesses a conserved energy, but the energy density is not positive, and hence difficult to utilize.}
\begin{equation}\label{eq:CSD:char}
Q(t)= \int_{\Bbb{R}^2} | \psi(t,x) |^2 dx=Q(0),
\end{equation}
whereas the  \eqref{eq:CSH:original} system obeys the \emph{conservation of the total energy}
\begin{equation}\label{eq:CSH:ener}
E(t)= \sum_{\mu=0,1,2} \frac{1}{2} \int_{\Bbb{R}^2} \abs{\covD_{\mu}\phi(t,x)}^2  \,dx =E(0).
\end{equation}

In order to figure out the optimal regularity for the \eqref{eq:CSD:original} system, observe that in the case of zero mass $m=0$, this system is invariant under the scaling
\begin{align*}
  \psi^{(\lambda)}(t,\,x) =\lambda\psi(\lambda t,\,\lambda x),\quad 
A^{(\lambda)}_{\mu}(t,\,x) =\lambda A_{\mu}(\lambda t,\,\lambda x).
\end{align*}
From this, we can derive the scaling critical Sobolev exponent $s_{c} = 0$ for $\psi$ and $A_{\mu}$. In view of \eqref{eq:CSD:char} we can say that the initial-value problem of \eqref{eq:CSD:original} is \emph{charge critical}. 
On the other hand, the \eqref{eq:CSH:original} equations is invariant under 
\begin{align*}
  \phi^{(\lambda)}(t,\,x) =\lambda^{1/2}\phi(\lambda t,\,\lambda x),\quad 
A^{(\lambda)}_{\mu}(t,\,x) =\lambda A_{\mu}(\lambda t,\,\lambda x).
\end{align*}
Therefore, the scaling critical Sobolev exponents are $s_c=1/2$ for $\phi$ and $s_c=0$ for $A_{\mu}$. 
In comparison with \eqref{eq:CSH:ener}, the Cauchy problem of \eqref{eq:CSH:original} is said to be \emph{energy sub-critical}. 

Another important property of the systems \eqref{eq:CSD:original} and \eqref{eq:CSH:original} is the gauge invariance. As such, a solution to the equations \eqref{eq:CSD:original} and \eqref{eq:CSH:original} is formed by a class of gauge equivalent pairs $(A_{\mu},\psi)$ and $(A_{\mu},\phi)$, respectively. In this work, we will fix the gauge by imposing the \emph{Lorenz gauge} condition $\rd^{\mu} A_{\mu} = 0$.

\subsection{Previous work}
The Cauchy problem for \eqref{eq:CSD:original} has been studied in \cite{Huh-d} by one of the authors (H. Huh) in the Lorenz, Coulomb and temporal gauges. Depending on the gauge choice, the \eqref{eq:CSD:original} system showed different features which were crucial in the study. Under the Lorenz gauge condition, in particular, local well-posedness of \eqref{eq:CSD:original} was established for initial data with regularity 
\begin{align*}
 A_{\mu}(0,x) \in H^{1/2}_x, \qquad \psi(0,x) \in H^{5/8}_x.
 \end{align*}
It is of interest to lower the regularity condition on the spinor field\footnote{The regularity required for the connection 1-form $A_{\mu}$, on the other hand, may be viewed as having less importance. The reason is because the \eqref{eq:CSD:original} system allows one to control the curvature $F_{\mu \nu}$ in terms of $\psi$, from which $A_{\mu}$ can be recovered under a suitable gauge choice.} $\psi(0,x)$ to $L^{2}_{x}$, in view of the fact that the conserved charge of \eqref{eq:CSD:original} is exactly the $L^{2}_{x}$ norm of $\psi(0,x)$. In this aspect, the best result so far, also proved in \cite{Huh-d}, is the local well-posedness result for initial data satisfying
\begin{equation*}
	A_{\mu}(0,x) \in L^{2}_{x}, \qquad \psi(0,x) \in H^{1/2+\eps}_{x},
\end{equation*}
for arbitrarily small $\eps > 0$, which was proved under the Coulomb gauge condition $\rd^{\ell} A_{\ell} = 0$.

On the other hand, the Cauchy problem for \eqref{eq:CSH:original} has been investigated in  \cite{CC, Huh-07, Huh-11, Selberg:2012vb}.  Global existence of solutions to \eqref{eq:CSH:original} with initial data $A_{\mu}(0, x)\in H^s_{x} $, $\phi(0, x)\in H^{s+1}_{x} $, $\partial_t\phi(0, x)\in H^s_{x}$ for $s\geq 1$ was shown by D. Chae and K. Choe \cite{CC}, under the Coulomb gauge condition. The regularity condition on the initial data has been improved by S. Selberg and A. Tesfahun \cite{Selberg:2012vb}, under the Lorenz gauge, to $A_{\mu}(0, x)\in \dot{H}^{1/2}_{x}$, $\phi(0, x)\in H^{1}_{x}$, $\partial_t\phi(0, x)\in L^2_{x}$. Note that this is the \emph{energy regularity}. The authors of \cite{Selberg:2012vb} also constructed a low regularity local-in-time solution satisfying
\begin{equation*}
A_{\mu}\in C_t((-T,T), \, H^{3/8 + \epsilon}_x), \quad \phi\in  C_t((-T,T),\, H^{7/8+\epsilon}_x)\cap C_t^{1} ( (-T, T), \,H^{-1/8+\eps}_{x}),
\end{equation*}
for arbitrarily small $\eps > 0$, which is below the energy regularity. 


\subsection{Main results}
In this paper, we make progress in establishing local well-posedness of both \eqref{eq:CSD:original} and \eqref{eq:CSH:original} systems, in the Lorenz gauge, for initial data with low regularity.

Let us begin with our result on \eqref{eq:CSD:original}. 
In general, an initial data set for this system consists of $(a_{0}, a_{1}, a_{2}, \psi_{0})$, where $A_{\mu}(0,x) = a_{\mu}(x)$ is a real-valued 1-form restricted to $\set{0} \times \bbR^{2}$  and $\psi(0, x) = \psi_{0}(x)$ is a spinor field (i.e. a $\bbC^{2}$-valued function) on $\set{0} \times \bbR^{2}$. Note that the equation for $F_{12} = \rd_{1} A_{2} - \rd_{2} A_{1}$ imposes a constraint for the initial data, which is that it should satisfy
\begin{equation} \label{eq:CSD:constraint}
	\rd_{1} a_{2} - \rd_{2} a_{1} = - \frac{2}{\kpp} (\psi_{0}^{\dagger} \psi_{0}).
\end{equation}
We will say that $(a_{0}, a_{1}, a_{2}, \psi_{0})$ is an \emph{$H^{s}$ initial data set} for \eqref{eq:CSD:original} if \eqref{eq:CSD:constraint} is satisfied and $a_{\mu}, \psi_{0} \in H^{s}_{x}$. Our first main theorem is that the \eqref{eq:CSD:original} system is well-posed for $H^{1/4+\eps}$ initial data in the Lorenz gauge, which is an improvement over the previous results of one of the authors \cite{Huh-d}.

\begin{theorem}[Local well-posedness for $H^{1/4+\eps}$ data] \label{thm:lwp4CSD}
Suppose that $(a_{0}, a_{1}, a_{2}, \psi_{0})$ is an $H^{1/4 + \eps}$ initial data set for the Chern-Simons-Dirac equations, where $0 < \eps \ll 1$. Then there exists $T = T(\nrm{a_{\mu}}_{H^{1/4+\eps}}, \nrm{\psi_{0}}_{H^{1/4+\eps}}, m) > 0$ such that a solution $(A_{\mu}, \psi)$ to \eqref{eq:CSD:original} under the Lorenz gauge condition $\rd^{\mu} A_{\mu} = 0$ with the prescribed initial data exists on the time interval $(-T, T)$, which satisfies
\begin{equation*}
	(A_{\mu}, \psi) \in C_t ( (-T, T), H^{1/4+\eps}_{x}).
\end{equation*}
\end{theorem}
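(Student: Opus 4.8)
The plan is to fix the gauge, reformulate the system into a set of wave and Dirac-type equations amenable to the $X^{s,b}$ machinery, and then close the argument by a contraction mapping.

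\textbf{Step 1: Gauge-fixed reformulation.} Under the Lorenz gauge $\rd^{\mu} A_{\mu} = 0$, one differentiates the curvature equation $F_{\mu\nu} = -\frac{2}{\kpp}\eps_{\mu\nu\lmb}(\overline{\psi}\gmm^{\lmb}\psi)$ in $\rd^{\mu}$ and uses the Bianchi/Lorenz relations to obtain a system of nonlinear wave equations
\[
	\bx A_{\mu} = -\frac{2}{\kpp}\eps_{\mu\nu\lmb}\,\rd^{\nu}(\overline{\psi}\gmm^{\lmb}\psi),
\]
with initial data $A_{\mu}(0) = a_{\mu}$ and $\rd_{t} A_{\mu}(0)$ determined from the $F_{0j}$ components of the curvature equation (which are first-order in $\psi_{0}$ only, hence lie in $H^{1/4+\eps-1}_{x}$ — consistent with the wave evolution at regularity $1/4+\eps$). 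For the spinor, one rewrites $i\gmm^{\mu}\rd_{\mu}\psi = m\psi + A_{\mu}\gmm^{\mu}\psi$ and, as is standard for the Dirac operator, applies $i\gmm^{\nu}\rd_{\nu}$ (or projects onto the two half-wave components via the Dirac projections $\Pi_{\pm}$) to put $\psi$ on the level of the wave/Klein-Gordon operator, so that $X^{s,b}$ estimates for $\bx$ (and its half-wave analogues) apply uniformly to both fields.

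\textbf{Step 2: Null structure and bilinear estimates.} The decisive input is the \emph{null structure} of the nonlinearity, which the paper advertises as one of its main contributions. The quadratic term $\rd^{\nu}(\overline{\psi}\gmm^{\lmb}\psi)$ contracted against $\eps_{\mu\nu\lmb}$, after passing to the half-wave decomposition $\psi = \psi_{+} + \psi_{-}$, should exhibit a cancellation in the parallel interactions (the $++$ and $--$ pieces) measured by an angle $\angle(\xi,\eta)$, i.e. it behaves like a $\calQ_{0}$- or $\calQ_{ij}$-type null form. Likewise the right-hand side of the Dirac equation, $A_{\mu}\gmm^{\mu}\psi$, after the projections must show the null structure between the connection and the spinor frequencies (this is the Dirac-equation null form familiar from Maxwell-Dirac / Dirac-Klein-Gordon). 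The core estimates are then bilinear $X^{s,b}$ estimates in $2+1$ dimensions of the schematic form
\[
	\nrm{\calQ(u,v)}_{X^{s, b-1+\eps}} \lesssim \nrm{u}_{X^{s,b}} \nrm{v}_{X^{s,b}},
\]
with $s = 1/4+\eps$, $b = 1/2 + \eps$, where $\calQ$ is the relevant null form; these follow by Fourier-analytic dyadic decomposition, using the gain $\angle(\xi,\eta)^{2} \lesssim \frac{|\xi+\eta| (\text{min modulations})}{|\xi||\eta|}$ together with the $L^{4}_{t,x}$ Strichartz estimate for the $2+1$-dimensional wave equation and its bilinear refinements (Klainerman--Tao / Foschi--Klainerman type estimates). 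One must be slightly careful to separate the elliptic (low output modulation) region, where no null gain is available but the output regularity $-1/2$ from $\bx^{-1}$ helps, from the hyperbolic region where the null form is essential.

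\textbf{Step 3: Contraction and recovery of the original system.} With the bilinear estimates in hand, set up the iteration map on the space $X^{1/4+\eps, 1/2+\eps}$ (localized in time, with the standard $\eps$-loss in $b$ absorbed by a small power of $T$), show it is a contraction on a ball for $T$ small depending only on the data norms and $m$, and obtain a fixed point $(A_{\mu}, \psi) \in X^{1/4+\eps,1/2+\eps} \subset C_{t}((-T,T), H^{1/4+\eps}_{x})$. Finally one checks that the solution of the \emph{derived} (differentiated) system, together with the constraint \eqref{eq:CSD:constraint} being propagated by the evolution, actually solves the \emph{original} \eqref{eq:CSD:original} system in the Lorenz gauge: this is the usual argument showing $\rd^{\mu} A_{\mu}$ solves a homogeneous wave equation with vanishing data (using the conservation law $\rd^{\nu}(\overline{\psi}\gmm_{\nu}\psi) = 0$ from the Dirac equation, i.e. current conservation) and that the first-order curvature relations hold. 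I expect \textbf{Step 2 — establishing the null structure and pushing the bilinear estimates down to $s = 1/4+\eps$} — to be the main obstacle: identifying the precise algebraic cancellation after the spinor decomposition, and then squeezing the counting of modulations/angles so that $1/4$ (rather than $3/8$ or $1/2$) suffices, is where the real work lies; uniqueness and persistence of regularity are then routine consequences of the estimates.
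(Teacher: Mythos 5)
Your outline reproduces the paper's overall architecture (Lorenz-gauge reformulation into coupled wave/Dirac equations, null structure, bilinear $X^{s,b}$ estimates, Picard iteration), but the two ingredients that actually make $s=1/4+\eps$ reachable are absent, and one concrete claim conflicts with what the estimates allow. The null structure of $\Box A_{\nu}=\rd^{\mu}\mathfrak{N}_{\mu\nu}(\psi,\psi)$ is \emph{not} a $Q_{0}$/$Q_{ij}$-type cancellation between the two spinor inputs. After decomposing via the commutator identity $\alp^{\mu}\Pi_{\pm}=\Pi_{\mp}\alp^{\mu}\Pi_{\pm}-\mR^{\mu}_{\pm}\Pi_{\pm}$, one piece is the D'Ancona--Foschi--Selberg spinor null form, but the remaining piece $\psi_{1,\pm_{1}}^{\dagger}\mR^{\lmb}_{\pm_{2}}\psi_{2,\pm_{2}}$ carries no angular gain between $\psi_{1}$ and $\psi_{2}$; the gain only appears after dualizing, as a $\nf^{\lmb\nu}$-type null form between $\psi_{2}$ and the \emph{output}. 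To exploit this one must keep a Riesz-type operator $\mR^{\mu}_{\pm}$ acting on the nonlinearity in the Duhamel representation while simultaneously removing the factor $\abs{\nb}^{-1}$ from the wave kernel, which is uncontrollable at low frequency on $\bbR^{2}$ (and you cannot instead distribute $\rd^{\nu}$ onto $\overline{\psi}\gmm^{\lmb}\psi$ by the product rule, since $\psi$ only has regularity $1/4+\eps$). This is precisely the content of the paper's modified Duhamel formula (Lemma \ref{lem:mDuhamel}), obtained by integrating the $\rd_{0}F_{0}$ term by parts in time; your proposal never addresses it. A parallel gap occurs on the Dirac side: the term $A_{\mu}\,\mR^{\mu}_{\pm}\psi_{\pm}$ left over by the commutator identity becomes a null form only after a Hodge decomposition $A_{j}=A^{\df}_{j}+A^{\cf}_{j}$ combined with the Lorenz gauge relation expressing $A_{0}$ and $A^{\cf}_{j}$ through $A_{0,\pm}$, producing $\nf^{12}$ and $\nf^{0}$ forms.

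Second, the choice $b=1/2+\eps$ cannot close at $s=1/4+\eps$. Converting one power of the angle into modulations (Lemma \ref{lem:est4angle}) costs a square root of the ratio of the total modulation to the minimum input frequency, and the resulting wave--Sobolev product estimates (Theorem \ref{thm:HsbProd}) impose, among other things, $s>1-b$ from the high-high-to-low interaction; with $b=1/2+\eps$ this forces $s>1/2-\eps$, well above $1/4$. The admissible window for $s=1/4+\eps$ pins $b$ near $3/4$, which is the value the paper uses; running the iteration at $b=1/2+\eps$ with $L^{4}$ Strichartz input, as you propose, would fail exactly in the regime you flag as the elliptic/low-output-modulation region.
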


\begin{remark} 
The proof of Theorem \ref{thm:lwp4CSD} will proceed by a Picard iteration argument;  other standard statements of local well-posedness (such as uniqueness in the iteration space, continuous dependence on the data and persistence of regularity) follow easily. We omit the precise statements for the sake of brevity.
\end{remark}

Note that Theorem \ref{thm:lwp4CSD} improves the result of \cite{Huh-d} in the Lorenz gauge, in terms of the regularity condition on both $a_{\mu}$ and $\psi_{0}$. Moreover, even compared to the results of \cite{Huh-d} in other gauges (in particular the Coulomb gauge), the regularity required for the spinor field $\psi_{0}$ is lower.

Key to our proof of Theorem \ref{thm:lwp4CSD} is the uncovering of a null structure of \eqref{eq:CSD:original} in the Lorenz gauge, which is different from that exhibited in \cite{Huh-d}. In particular, the null structure for the wave equations for $A_{\mu}$ seems new. Another novelty of our proof, which we were not able to find in the existing literature, is a simple variant of the Duhamel's formula (Lemma \ref{lem:mDuhamel}) via integration-by-parts which cancels the cumbersome singularity $\abs{\nb}^{-1}$ in the wave kernel, yet retains `structure' so that null structure can be revealed. 

It turns out that our techniques for \eqref{eq:CSD:original} can be easily applied to \eqref{eq:CSH:original} in the Lorenz gauge as well. Let us denote an initial data set for this system by $(a_{\mu}, f, g)$, where $A_{\mu}(0, x) = a_{\mu}$ is a real-valued 1-form restricted to $\set{0} \times \bbR^{2}$ and $(\phi, \rd_{t} \phi)(0, x) = (f,\, g)$ is a pair of complex-valued functions on $\set{0} \times \bbR^{2}$. As in the case of \eqref{eq:CSD:original}, we will say that $(a_{\mu}, f, g)$ is an \emph{$H^{s+1/2}$ initial data set} for \eqref{eq:CSH:original} if $a_{\mu} \in H^{s}_{x}$, $(f, g) \in H^{s+1/2}_{x} \times H^{s-1/2}_{x}$ and the following \emph{constraint equation} is satisfied.
\begin{equation} \label{eq:CSH:constraint}
	\rd_{1} a_{2} - \rd_{2} a_{1} = \frac{2}{\kpp} \Im \big( \overline{f} (g - i a_{0} f) \big ).
\end{equation}

In the last section of this paper, we will establish the following local well-posedness result, which is an improvement over the previous result of Selberg and Tesfahun \cite{Selberg:2012vb} in terms of regularity required for $\phi$ and $A_{\mu}$. This is our second main theorem.

\begin{theorem}[Local well-posedness for $H^{3/4+\eps}$ data] \label{thm:lwp4CSH}
Suppose that $(a_{0}, a_{1}, a_{2}, f, g)$ is an $H^{3/4 + \eps}$ initial data set for the Chern-Simons-Higgs equations, where $0 < \eps \ll 1$. Then there exists $T = T(\nrm{a_{\mu}}_{H^{1/4+\eps}}, \nrm{(f, g)}_{H^{3/4+\eps} \times H^{-1/4+\eps}}) > 0$ such that a solution $(A_{\mu}, \phi)$ to \eqref{eq:CSH:original} under the Lorenz gauge condition $\rd^{\mu} A_{\mu} = 0$ with the prescribed initial data exists on the time interval $(-T, T)$, which satisfies
\begin{equation*}
	A_{\mu} \in C_t ( (-T, T), H^{1/4+\eps}_{x}), \quad \phi \in C_t ( (-T, T), H^{3/4+\eps}_{x}) \cap C_t^{1} ( (-T, T), H^{-1/4+\eps}_{x}).
\end{equation*}
\end{theorem}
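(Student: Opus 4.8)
The plan is to rewrite \eqref{eq:CSH:original} in the Lorenz gauge as a system of nonlinear wave equations and then run a Picard iteration in suitable $X^{s,b}$ spaces, re-using the null-structure estimates and the integration-by-parts Duhamel trick (Lemma \ref{lem:mDuhamel}) already developed for \eqref{eq:CSD:original}. First I would impose $\rd^{\mu} A_{\mu} = 0$; together with $F_{\mu\nu} = \rd_{\mu}A_{\nu} - \rd_{\nu}A_{\mu}$ this turns the first equation of \eqref{eq:CSH:original} into $\Box A_{\nu} = \rd^{\mu}\big(\tfrac{2}{\kpp}\eps_{\mu\nu\lmb}\Im(\overline{\phi}\covD^{\lmb}\phi)\big)$, and the second, after expanding $\covD^{\mu}\covD_{\mu}\phi = \Box\phi - 2iA^{\mu}\rd_{\mu}\phi - i(\rd^{\mu}A_{\mu})\phi - A^{\mu}A_{\mu}\phi$, becomes $\Box\phi = 2iA^{\mu}\rd_{\mu}\phi + A^{\mu}A_{\mu}\phi$ (the $\rd^{\mu}A_{\mu}$ term dropping out, which is exactly why the Lorenz gauge is convenient). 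The unknowns are then $A_{\mu} \in X^{1/4+\eps,\,b}$ and $\phi \in X^{3/4+\eps,\,b}$ for some $b$ slightly above $1/2$, with initial data placed by the standard energy/extension lemma for $X^{s,b}$ spaces; the $C_t$ and $C_t^1$ conclusions follow from the embedding $X^{s,b}\hookrightarrow C_t H^s_x$ for $b > 1/2$ once the fixed point is found.

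The crux is to close the bilinear and trilinear estimates. For the $\phi$-equation the dangerous term is $A^{\mu}\rd_{\mu}\phi$: written out, $A^{\mu}\rd_{\mu}\phi = A^{0}\rd_{t}\phi - A^{j}\rd_{j}\phi$, and the key point — exactly as in the \eqref{eq:CSD:original} analysis — is that because $A_{\mu}$ is built (via the first equation and Lemma \ref{lem:mDuhamel}) out of expressions of the schematic form $\rd^{-1}\rd(\overline{\phi}\,\rd\phi)$, the product $A^{\mu}\rd_{\mu}\phi$ inherits a null structure: the symbol of the quadratic form hits zero on the relevant part of the light cone interaction, which buys the derivative gain needed to absorb the $1/2$-derivative mismatch between $\phi \in H^{3/4+\eps}$ and $A \in H^{1/4+\eps}$. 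Concretely I would reduce $A^\mu \rd_\mu \phi$ to a sum of null forms $Q_{0}$, $Q_{\mu\nu}$ acting on $\phi$ and on the bilinear expression defining $A$, and then invoke the product estimates for $X^{s,b}$ spaces on $\bbR^{1+2}$ (the $2D$ bilinear estimates of Klainerman–Machedon / Foschi–Klainerman type, which are available in the literature and are presumably recorded in the preliminary section). The cubic term $A^{\mu}A_{\mu}\phi$ is lower order: $A\in H^{1/4+\eps}$ is already subcritical enough that a plain product estimate $H^{1/4+\eps}\cdot H^{1/4+\eps}\cdot H^{3/4+\eps}$, combined with the time-localization gain $T^{\tht}$, suffices with no structure needed.

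For the $A$-equation one writes $\Box A_{\nu} = \rd^{\mu}(\ldots)$ and uses Lemma \ref{lem:mDuhamel} to integrate by parts, removing the $\abs{\nb}^{-1}$ that the naive Duhamel formula would produce and leaving a clean bilinear expression $\overline{\phi}\,\rd\phi$ (up to $\covD$ versus $\rd$, the difference being the cubic $A\abs{\phi}^2$, again harmless) on which one must prove an estimate into $X^{1/4+\eps,\,b}$. Here too the $\eps_{\mu\nu\lmb}\rd^{\mu}$ antisymmetrization is what reveals the null form — this is the "null structure for the wave equations for $A_{\mu}$" advertised in the introduction — and the relevant bilinear $X^{s,b}$ estimate on $\bbR^{1+2}$ closes with room to spare since we only demand $1/4+\eps$ regularity for $A$. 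Finally I would assemble the pieces: all multilinear estimates come with a positive power of $T$ from Hölder in time against the $X^{s,b}$ norms with $b' $ slightly below $1/2 < b$, so choosing $T$ small depending on the data norms makes the iteration map a contraction on a ball, yielding existence and the asserted regularity.

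\textbf{Main obstacle.} I expect the hard part to be the bilinear estimate for $A^{\mu}\rd_{\mu}\phi$ with the asymmetric regularities $H^{3/4+\eps}$ for $\phi$ and only $H^{1/4+\eps}$ for $A$: one must verify that the null structure is strong enough to compensate the full half-derivative gap in two spatial dimensions, where $X^{s,b}$ bilinear estimates are notoriously delicate near the endpoint (low dimension means weaker dispersion and the $b=1/2$ borderline is genuinely lost, so one leans on the $\eps$ and on the null-form gain simultaneously). Checking the symbol bounds region-by-region in frequency (high-high to low, high-low, parallel vs.\ transversal interactions) is where the real work sits; everything else is either a direct transcription of the \eqref{eq:CSD:original} argument or soft functional-analytic bookkeeping.
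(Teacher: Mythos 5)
Your proposal follows essentially the same route as the paper: reduce to $\Box A_{\nu} = \rd^{\mu}(\cdots)$ and a wave equation for $\phi$ in the Lorenz gauge, use Lemma \ref{lem:mDuhamel} to kill the $\abs{\nb}^{-1}$ in the $A$-equation, exhibit null structure in $A^{\mu}\rd_{\mu}\phi$ and in the source of the $A$-equation, and close a Picard iteration in $X^{s,b}$ via the d'Ancona--Foschi--Selberg product estimates. Two points deserve flagging. First, you do not address the low-frequency problem in the \emph{$\phi$-equation}: in $2+1$ dimensions the Duhamel kernel $\sin((t-s)\abs{\nb})/\abs{\nb}$ and the half-wave decomposition $\phi_{\pm} = \tfrac12(\phi \mp \tfrac{1}{i\abs{\nb}}\phi_t)$ involve $\abs{\nb}^{-1}$ acting on data and sources that are only in inhomogeneous Sobolev spaces, and this is not bounded at low frequencies. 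The paper circumvents this by adding an artificial mass, rewriting the equation as $(\Box+1)\phi = 2iA^{\mu}\rd_{\mu}\phi + A^{\mu}A_{\mu}\phi + \phi$ so that $\brk{\nb}^{-1}$ appears instead; your iteration as written has a genuine (though standard and fixable) issue here. Second, your account of where the null structure of $A^{\mu}\rd_{\mu}\phi$ comes from is not quite what the paper does: rather than substituting the bilinear expression defining $A$ into the product, the paper keeps $A$ as an independent $X^{s,b}$ unknown, splits $A_j$ into divergence-free and curl-free parts via the Hodge decomposition, and uses the Lorenz gauge relation $\rd^{\ell}A_{\ell} = -\rd_0 A_0$ to express $A_0$ and $A^{\mathrm{cf}}_j$ in terms of the half-waves of $A_0$; this produces the $\nf^{12}$ and $\nf^{0}$ null forms directly at the bilinear level, which is what allows the estimates to close as bilinear (not trilinear) $H^{s,b}$ product estimates with $A\in H^{1/4+\eps}$ and $\phi\in H^{3/4+\eps}$.
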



\section{Preliminaries}
\subsection{Notations and conventions}
We collect here a few notations and conventions used in this paper.
\begin{itemize}
\item We will use the convention of writing $A \aleq B$ as a shorthand for $\abs{A} \leq C B$ for some $C > 0$.  
\item Given a nice (say Schwartz) function on $\bbR^{1+2}$, we will use the notation $\sptF{\phi}$ to refer to the \emph{space-time Fourier transform} of $\phi$.  Alternatively, we may also write $\widetilde{\phi} = \sptF{\phi}$. 
\item Given two functions $\phi$, $\psi$ on $\bbR^{1+2}$, we will use the notation $\phi \Fleq \psi$ to denote $\abs{\sptF{\phi}} \aleq \sptF{\psi}$.
\end{itemize}

\subsection{Half-wave decomposition of the d'Alembertian} 
Following our sign convention, the d'Alembertian is defined as $\Box = \rd_{t}^{2} - \lap$, where $\lap = \sum_{j} \rd_{j}^{2}$.
In order to make the dispersive properties of the wave equation more clear, it is useful to write the wave equation $\Box \phi = F$ as a first order system
\begin{equation*}
\frac{\rd}{\rd t} \vect{\phi}{\phi_{t}} = \matr{0}{1}{\lap}{0} \vect{\phi}{\phi_{t}} + \vect{0}{F},
\end{equation*}
and diagonalize the symbol, by conjugating with the Fourier transform. We are then led to the transform $(\phi, \phi_{t}) \to (\phi_{+}, \phi_{-})$ and $(0, F) \to (F_{+}, F_{-})$, where
\begin{equation*}
	\phi_{\pm} = \frac{1}{2} (\phi \mp \frac{1}{i \abs{\nb}} \phi_{t}), \hbox{ and } F_{\pm} = \mp \frac{1}{2i \abs{\nb}} F,
\end{equation*}
with $\abs{\nb} = \sqrt{- \lap}$ as usual. We then obtain the following diagonal first order system
\begin{equation*}
\frac{\rd}{\rd t} \vect{\phi_{+}}{\phi_{-}} = \matr{-i\abs{\nb}}{0}{0}{+i\abs{\nb}} \vect{\phi_{+}}{\phi_{-}} + \vect{F_{+}}{F_{-}},
\end{equation*}
or equivalently, the following pair of \emph{half-wave equations}
\begin{equation} \label{}
	(-i\rd_{t}  \pm  \abs{\nb}) \phi_{\pm} = \pm \frac{1}{2 \abs{\nb}} F.
\end{equation}


For the purpose of revealing null structures, it is useful to introduce the \emph{modified Riesz transforms} $\mR^{\mu}$, which are self-adjoint operators defined as follows.
\begin{align*}
&\mR^{0}_{\pm} = \mR_{\pm,0}   := -1, \\
& \mR^{j}_{\pm} = -\mR_{\pm,j}  := \mp \bb( \frac{\rd_{j}}{i \abs{\nb}} \bb).
\end{align*}
The modified Riesz transform $\mR_{\pm, \mu}$ models the usual Riesz transform applied to a free $(\pm)$-half-wave (up to a sign). Indeed, let $\phihom_{\pm}$ be a solution to $(-i \rd_{0} \pm \abs{\nb}) \phihom_{\pm} = 0$. Then we have
\begin{equation*}
\frac{\rd_{\mu}}{i \abs{\nb}} \phihom_{\pm} = \pm \, \mR_{\pm, \mu} \phihom_{\pm}.
\end{equation*}

The following lemma is a variant of the usual Duhamel's formula when $\Box \phi$ is of a divergence form; it is essentially an integration by parts in time.
\begin{lemma} \label{lem:mDuhamel}
Let $\phi$, $F_{\mu}$ ($\mu = 0,1,2$) be Schwartz functions, such that\footnote{We remark that the Einstein summation convention of summing up repeated upper and lower indices is in order.}
\begin{equation*}
	\Box \phi = \rd^{\mu} F_{\mu}.
\end{equation*}
Then the half-waves $\phi_{\pm}$ are given by the Duhamel's formula
\begin{equation*}
	\phi_{\pm} (t,x) = \phihom_{\pm} (t,x) - \frac{1}{2} \int_{0}^{t} e^{\pm (-i) (t-s) \abs{\nb}}  \mR_{\pm}^{\mu} F_{\mu}(s, x)  \, \ud s,
\end{equation*}
where $\phihom_{\pm}(t,x)$ are the free half-waves given by
\begin{equation*}
\phihom_{\pm}(t,x) := \frac{1}{2} e^{\pm (-i) t \abs{\nb}} \bb( \phi (0, x) \pm \frac{1}{i \abs{\nb}} (F_{0}(0,x) - \rd_{0} \phi(0,x) )\bb).
\end{equation*}
\end{lemma}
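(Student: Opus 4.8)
The plan is to verify the stated Duhamel formula by directly checking three things: that $\phi_{\pm}$ as defined solves the correct half-wave equation, that the initial data match, and that the free part $\phihom_{\pm}$ is genuinely a free half-wave. The starting point is the half-wave decomposition already recorded in the excerpt: from $\Box \phi = \rd^{\mu} F_{\mu}$ we know that $\phi_{\pm} = \tfrac12(\phi \mp \tfrac{1}{i\abs{\nb}}\rd_t \phi)$ satisfies
\begin{equation*}
(-i\rd_t \pm \abs{\nb}) \phi_{\pm} = \pm \frac{1}{2\abs{\nb}} \rd^{\mu} F_{\mu} = \pm \frac{1}{2\abs{\nb}} \big( \rd_0 F_0 + \rd^j F_j \big).
\end{equation*}
The usual Duhamel formula would then give $\phi_{\pm}(t) = e^{\mp i t \abs{\nb}} \phi_{\pm}(0) \mp \tfrac{1}{2}\int_0^t e^{\mp i(t-s)\abs{\nb}} \abs{\nb}^{-1} \rd^{\mu} F_{\mu}(s)\, \ud s$, but this form has the singular factor $\abs{\nb}^{-1}$ sitting in front of the whole source, including the term $\rd_0 F_0$ which brings no compensating spatial derivative.

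**The key step** is to integrate by parts in time in the piece of the Duhamel integral containing $\rd_0 F_0 = \rd_s F_0(s)$. Writing that piece as $\mp \tfrac{1}{2}\int_0^t e^{\mp i(t-s)\abs{\nb}} \abs{\nb}^{-1} \rd_s F_0(s)\, \ud s$ and integrating by parts, the boundary terms produce $\mp\tfrac12 \abs{\nb}^{-1} F_0(t) \pm \tfrac12 e^{\mp i t \abs{\nb}} \abs{\nb}^{-1} F_0(0)$, while the bulk term picks up a time derivative of the exponential, $\rd_s e^{\mp i(t-s)\abs{\nb}} = \pm i\abs{\nb} e^{\mp i(t-s)\abs{\nb}}$, which exactly cancels the $\abs{\nb}^{-1}$ and converts $F_0$ into $\mathrm{i}$ times a clean (non-singular) transport of $F_0$. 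The boundary term $\mp\tfrac12\abs{\nb}^{-1}F_0(t)$ combines with $\phi_\pm$ on the left (or is absorbed by rewriting in terms of $\mR_\pm^0 = -1$), and $e^{\mp i t\abs{\nb}}\abs{\nb}^{-1}F_0(0)$ gets folded into the definition of $\phihom_\pm$. After this manipulation, every surviving source term carries either a spatial Riesz factor $\rd_j/(i\abs{\nb})$ (i.e. $\mp\mR_\pm^j$) or just $-1 = \mR_\pm^0$, so the integrand becomes precisely $-\tfrac12 e^{\mp i(t-s)\abs{\nb}} \mR_\pm^\mu F_\mu(s)$, matching the claimed formula. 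One then reads off that the remaining constant-in-$s$ contributions assemble into $\phihom_\pm(t,x) = \tfrac12 e^{\mp i t\abs{\nb}}\big(\phi(0,x) \pm \tfrac{1}{i\abs{\nb}}(F_0(0,x) - \rd_0\phi(0,x))\big)$, and since $\phi_\pm(0) = \tfrac12(\phi(0) \mp \tfrac{1}{i\abs{\nb}}\rd_0\phi(0))$, the discrepancy between $\phihom_\pm(0)$ and $\phi_\pm(0)$ is exactly the $\pm\tfrac12\abs{\nb}^{-1}F_0(0)$ boundary term, so the formula is consistent at $t=0$ and $\phihom_\pm$ visibly solves $(-i\rd_0 \pm \abs{\nb})\phihom_\pm = 0$.

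**The main obstacle**, such as it is, is purely bookkeeping: tracking the four signs ($\pm$ from the half-wave label, the sign in $\mR^j_\pm = \mp \rd_j/(i\abs{\nb})$, the sign from $F_\pm = \mp \tfrac{1}{2i\abs{\nb}}F$, and the sign from differentiating the exponential) so that they land consistently, and making sure the boundary terms are distributed correctly between the left-hand side, the free part, and the integral. Since $\phi$ and $F_\mu$ are assumed Schwartz, there are no convergence or domain issues — all the operators $\abs{\nb}^{-1}$, $e^{\pm i t\abs{\nb}}$, $\rd_j/\abs{\nb}$ act on Schwartz (or at least Schwartz-in-space) functions and the integration by parts in $s$ is fully justified. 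I would carry out the sign check on the Fourier side, where $\abs{\nb} \mapsto \abs{\xi}$, $\rd_j \mapsto i\xi_j$, and the half-wave propagators are just scalar exponentials $e^{\mp i t\abs{\xi}}$, making every manipulation an elementary one-variable computation; then the result transfers back verbatim.
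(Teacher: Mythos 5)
Your proposal is correct and is essentially the paper's argument: both hinge on integrating by parts in time in the $\rd_{0}F_{0}$ portion of the Duhamel integral so that the derivative falls on the kernel, cancels the singular $\abs{\nb}^{-1}$, and leaves the $s=0$ boundary term to be absorbed into $\phihom_{\pm}$. The only (cosmetic) difference is that the paper integrates by parts in the second-order formula, where the $\sin((t-s)\abs{\nb})$ kernel makes the $s=t$ boundary term vanish, before splitting into exponentials, whereas you work with the first-order half-wave Duhamel formula and must move the resulting $\mp\tfrac{1}{2i\abs{\nb}}F_{0}(t)$ boundary term to the left-hand side --- which, as you correctly indicate, amounts to reading $\phi_{\pm}$ as the half-wave split adapted to $(\phi,\rd_{t}\phi-F_{0})$ rather than $(\phi,\rd_{t}\phi)$, consistent with the stated form of $\phihom_{\pm}$.
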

\begin{proof} 
Let us start with the usual Duhamel's formula
\begin{equation*}
	\phi(t,x) = \phihom(t,x) + \int_{0}^{t} \frac{\sin (t-s) \abs{\nb}}{\abs{\nb}} \rd_{0} F_{0}(s,x) \, \ud s + \int_{0}^{t} \frac{\sin (t-s) \abs{\nb}}{\abs{\nb}} \rd^{j} F_{j}(s,x) \, \ud s.
\end{equation*}
where
\begin{equation*}
\phihom(t,x) = \frac{1}{2} \sum_{\pm} e^{\pm (-i) t \abs{\nb}} \bb( \phi(0, x) \mp \frac{1}{i \abs{\nb}} \rd_{0} \phi(0,x) \bb).
\end{equation*}
Expanding out $\sin(t-s) \abs{\nb}$ of the last term in terms of exponentials, we easily see that
\begin{equation*}
\int_{0}^{t} \frac{\sin (t-s) \abs{\nb}}{\abs{\nb}} \rd^{j} F_{j}(s,x) \, \ud s 
= - \sum_{\pm} \frac{1}{2} \int_{0}^{t} e^{\pm (-i) (t-s)\abs{\nb}} \mR^{j}_{\pm} F_{j} (s,x) \, \ud s.
\end{equation*}
On the other hand, for the second term we proceed as follows.
\begin{align*}
	&\int_{0}^{t} \frac{\sin (t-s) \abs{\nb}}{\abs{\nb}} \rd_{0} F_{0}(s,x) \, \ud s  \\
	& \quad = \int_{0}^{t} \cos((t-s) \abs{\nb}) F_{0}(s,x) \, \ud s 
	- \frac{\sin t \abs{\nb}}{\abs{\nb}} F_{0}(0, x)\\
	& \quad = \sum_{\pm} \bb( \pm e^{\pm(- i) t \abs{\nb}} \frac{1}{2i \abs{\nb}}  F_{0}(0, x)
		- \frac{1}{2} \int_{0}^{t} e^{\pm (-i) (t-s) \abs{\nb}} \mR^{0}_{\pm} F_{0}(s,x) \, \ud s \bb),
\end{align*}
from which the lemma follows. We remind the reader that $\rd^{0} =\rd_{0} = \rd_{t}$. The first equality may be justified rigorously by, say, using the Fourier transform.
\end{proof}

This lemma is useful because the operator $\abs{\nb}^{-1}$ (which is unfavorable for low frequencies, especially on $\bbR^{2}$) in the wave kernel is canceled, whereas $\mR^{\mu}_{\pm}$ remains; the latter is important for revealing a null structure after a duality argument.

We will also consider a massive Klein-Gordon equation
\begin{equation} \label{eq:KG}
	(\Box + 1) \phi = F.
\end{equation}
For this equation, the $(\pm)$-half-wave decomposition takes the form
\begin{equation} \label{eq:hwDecomp4KG}
\phi_{\pm} = \frac{1}{2} (\phi \mp \frac{1}{i \brk{\nb}} \phi_{t}),
\end{equation}
where $\brk{\nb} := (1-\lap)^{1/2}$, and \eqref{eq:KG} becomes
\begin{equation} \label{eq:KG:hw}
	(-i \rd_{t} \pm \brk{\nb}) \phi_{\pm} = \pm \frac{1}{2 \brk{\nb}} F.
\end{equation}

\subsection{Dirac operator} \label{subsec:prelim:dirac}
Let $\eta_{\mu \nu}$ be the Minkowski metric on $\bbR^{1+2}$ with signature $(+, -, -)$. Consider the \emph{gamma matrices} $\gmm^{\mu}$ ($\mu = 0,1,2$) which are defined as follows:
\begin{equation*}
	\gmm^{0} = \sgm^{3}, \quad
	\gmm^{1} = i \sgm^{2}, \quad
	\gmm^{2} = -i \sgm^{1}.
\end{equation*}
Here, $\sgm^{j}$ $(j=1,2,3)$ are the \emph{Pauli matrices}, which are $2 \times 2$-matrices of the form
\begin{equation*}
	\sgm^{1} = \left( \begin{array}{cc} 0 & 1 \\ 1 & 0 \end{array} \right), \quad
	\sgm^{2} = \left( \begin{array}{cc} 0 & -i \\ i & 0 \end{array} \right), \quad
	\sgm^{3} = \left( \begin{array}{cc} 1 & 0 \\ 0 & -1 \end{array} \right).
\end{equation*}
Note that $\sgm^{j}$ satisfy the following algebraic properties (for $j, k = 1,2,3$):
\begin{equation*}
	\frac{1}{2}(\sgm^{j} \sgm^{k} + \sgm^{k} \sgm^{j}) = \dlt^{jk} \Id, \quad \sgm^{1} \sgm^{2} \sgm^{3} = i \Id.
\end{equation*}
Furthermore, $\gmm^{\mu}$ obey the following multiplication law:
\begin{equation*}
	\frac{1}{2} (\gmm^{\mu} \gmm^{\nu} +\gmm^{\mu} \gmm^{\nu}) = \eta^{\mu \nu} \Id.
\end{equation*}


Let $\psi$ be a \emph{2-spinor field}, i.e. $\bbC^{2}$-valued function on $\bbR^{1+2}$. 
The \emph{massive Dirac operator} of mass $m$ $(m \geq 0)$ is defined by
\begin{equation*}
	\Dirac \psi := (i \gmm^{\mu} \rd_{\mu} - m) \psi.
\end{equation*}
The equation $\Dirac \psi = 0$ is called the \emph{Dirac equation}. The Dirac equation is a Lagrangian field theory; i.e.  $\calD \psi = 0$ is the Euler-Lagrange equation of an action functional given by a Lagrangian, which we shall describe below. Given a spinor $\psi$, we define its \emph{Dirac adjoint} to be $\overline{\psi} = \psi^{\dagger} \gmm^{0}$. The \emph{Dirac Lagrangian} is defined to be
\begin{equation*}
	\calL_{D} [\psi] = i \overline{\psi} \, \gmm^{\mu} \rd_{\mu} \psi - m \overline{\psi} \psi.
\end{equation*}

In what follows, we will outline the approach to study the Dirac equation taken in \cite{DAncona:2007ex}. Since $\gmm^{0}$  acts differently on components, in practice it is often convenient to use the original $\bt, \,\alp^{i}$ formulation of the Dirac operator, which goes back to P. M. Dirac. For this purpose, we multiply the Dirac operator 
by $\gmm^{0}$ on the left and define
\begin{equation*}
	\bt := \gmm^{0}, \quad \alp^{1} := \gmm^{0} \gmm^{1} =  \sgm^{1}, \quad \alp^{2} := \gmm^{0} \gmm^{2} = \sgm^{2}.
\end{equation*}
For convenience, we also define $\alp^{0} := \Id$.  Then the following  relations hold:
\begin{equation*}
	\bt \alp^{i} + \alp^{i} \bt = 0, \quad \frac{1}{2}(\alp^{i} \alp^{j} + \alp^{j} \alp^{i}) = \dlt^{ij} \Id.
\end{equation*}
We then have
\begin{equation*}
	\bt \calD \psi = i \rd_{0} \psi + i \alp^{j} \rd_{j} \psi - m \bt \psi.
\end{equation*}

In order to study this first order PDE, it is natural to rewrite the above equation in the eigenbasis of the spatial operator $-i \alp^{j} \rd_{j}$. Using the Fourier transform, this operator is conjugated to $\xi_{j}\alp^{j} $. Since $(\xi_{j}\alp^{j})^{2} = \abs{\xi}^{2} \Id$, the only possible eigenvalues of $\xi_{j}\alp^{j}$ are $\pm \abs{\xi}$; by symmetry, we can easily infer that each eigenvalue corresponds to one eigenvector. Now consider the $2\times2$-matrrix $\Pi = \Pi(\xi)$ defined by
\begin{equation*}
	\Pi(\xi) := \frac{1}{2} \bb( \Id + \frac{\xi_{j}\alp^{j}}{\abs{\xi}} \bb),
\end{equation*}
and define $\Pi_{\pm} (\xi) := \Pi(\pm \xi)$. It is easy to verify that $\Pi_{\pm}$ is the projection to the eigenspace corresponding to $\pm \abs{\xi}$, respectively. Equivalently, the following algebraic relations hold.
\begin{align*}
	& \Pi_{\pm}(\xi)^{2} = \Pi_{\pm}(\xi), \quad \Pi_{+}(\xi) \Pi_{-}(\xi) = 0, \\
	& \Id = \Pi_{+}(\xi) + \Pi_{-}(\xi), \quad \xi_{j}\alp^{j} = \abs{\xi} \Pi_{+}(\xi) - \abs{\xi} \Pi_{-}(\xi).
\end{align*}
Note, furthermore, that $\Pi$ satisfy the following laws.
\begin{align}
& \Pi_{\pm}(\xi) = \Pi_{\mp}(-\xi) = \Pi(\pm \xi), \quad \bt \Pi(\xi) = \Pi(-\xi) \bt , \\
& \alp^{i} \Pi(\xi) = \Pi(-\xi) \alp^{i}  + \frac{\xi_{i}}{\abs{\xi}} \Id. \label{eq:dirac:1}
\end{align}

We will use the notation $\Pi_{\pm} := \Pi_{\pm}(\nb/i)$, and define $\psi_{\pm} := \Pi_{\pm} \psi$. Applying $\Pi_{\pm}$, the Dirac operator becomes
\begin{equation*}
	\Pi_{\pm} (\bt \Dirac \psi) = -(-i \rd_{0} \pm \abs{\nb}) \psi_{\pm} - m \bt \psi_{\mp},
\end{equation*}
whose principal term is nothing but the \emph{half-wave} operator. In particular, when $m=0$ and $\calD \psi = 0$, then $\psi_{\pm}$ is a free $(\pm)$-half-wave. This will be the basis of the Fourier analytic study of the Dirac operator. 

Utilizing the modified Riesz transforms $\mR_{\pm}^{\mu}$,  the quantization $\xi^{j} \to- i \rd^{j}$ of the relation \eqref{eq:dirac:1} may be rewritten concisely as
\begin{equation} \label{eq:commAlpPi}
	\alp^{\mu} \Pi_{\pm} =  \Pi_{\mp} \alp^{\mu} \Pi_{\pm} - \mR^{\mu}_{\pm} \Pi_{\pm},
\end{equation}
where we remark that the case $\mu = 0$ is rather trivial.

\subsection{$X^{s,b}_{\pm}$ and $H^{s,b}$ spaces} \label{subsec:prelim:Xsb}
The key technical tool that we shall use to prove low regularity local well-posedness is the $X^{s,b}$-type spaces, which was introduced by Klainerman-Machedon \cite{Klainerman:1995vs} in the context of nonlinear wave equations and Bourgain \cite{MR1209299}, \cite{MR1215780} for NLS and KdV. In this subsection, we briefly summarize the portion of the theory that will be used in this paper.

Let $\phi$ be a Schwartz function on $\bbR^{1+2}$. For $s, b \in \bbR$ and a sign $\pm$, we define the $X^{s,b}_{\pm}$ norm of $\phi$ by
\begin{equation*}
	\nrm{\phi}_{X^{s,b}_{\pm}} := \nrm{(1 + \abs{\xi})^{s} (1 + \abs{\tau \pm \abs{\xi}})^{b} \widetilde{\phi}(\tau, \xi)}_{L^{2}_{\tau, \xi}}. 
\end{equation*}
We also define the $H^{s,b}$ norm as follows.
\begin{equation*}
	\nrm{\phi}_{H^{s,b}} := \nrm{(1 + \abs{\xi})^{s} (1 + \abs{\abs{\tau} - \abs{\xi}})^{b} \widetilde{\phi}(\tau, \xi)}_{L^{2}_{\tau, \xi}}. 
\end{equation*}
The Banach spaces $X^{s,b}_{\pm}$ and $H^{s,b}$ are defined by taking the closure of $\calS(\bbR^{1+2})$ with respect to each norm. For an arbitrary sign $\pm$ and $s, b \in \bbR$, note that the norms $X^{s,b}_{\pm}, H^{s,b}$ of $\phi \in \calS(\bbR^{1+2})$ depend only on the size of its space-time Fourier transform; i.e. for $\phi \Fleq \psi$, we have
\begin{equation*}
	\nrm{\phi}_{X^{s,b}_{\pm}} \aleq \nrm{\psi}_{X^{s,b}_{\pm}}, \quad \nrm{\phi}_{H^{s,b}} \aleq \nrm{\psi}_{H^{s,b}},
\end{equation*}
where we remind the reader that $\phi \Fleq \psi$ is the shorthand for $\abs{\widetilde{\phi}} \aleq \widetilde{\psi}$. Moreover, for $b \geq 0$, observe that the following inclusion relations hold.
\begin{equation*}
	X^{s,b}_{\pm} \subset H^{s, b} \subset  H^{s, -b} \subset X^{s,-b}_{\pm}.
\end{equation*}
Note that the spaces $X^{s,b}_{\pm}$ are, by nature, defined globally on $\bbR^{1+2}$. In order to utilize these spaces in the local-in-time setting, we will introduce the notion of the \emph{restriction space}. Given $T > 0$, consider the subset $S_{T} := (-T, T) \times \bbR^{2}$ of $\bbR^{1+2}$. We define the \emph{restriction norm} $X^{s,b}_{\pm}(S_{T})$ for a function $\phi$ on $S_{T}$ by
\begin{equation*}
	\nrm{\phi}_{X^{s,b}_{\pm}(S_{T})} := \inf \set{\nrm{\psi}_{X^{s,b}_{\pm}} : \psi \in X^{s,b}_{\pm}, \psi = \phi \hbox{ on } S_{T}}.
\end{equation*}

Let us state a few lemmas regarding $X^{s,b}_{\pm}$ spaces; for proofs, we refer the reader to \cite[Section 4]{DAncona:2007ex} or \cite[\S 2.6]{MR2233925}.

\begin{lemma}[Embedding into $C_{t} H^{s}_{x}$] \label{lem:Xsb2Hs}
For $T > 0$ and $b > 1/2$, the following embedding holds.
\begin{equation*}
X^{s,b}_{\pm}(S_{T}) \subset C_{t} ((-T, T), H^{s}_{x}).
\end{equation*}
\end{lemma}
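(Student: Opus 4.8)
The plan is to reduce to a global-in-time embedding and then exploit the modulation structure of $X^{s,b}_\pm$. First I would observe that it suffices to prove $X^{s,b}_\pm \subset C_t(\bbR, H^s_x)$ together with the quantitative bound $\sup_t \nrm{\psi(t)}_{H^s_x} \aleq \nrm{\psi}_{X^{s,b}_\pm}$: given $\phi$ on $S_T$ with finite restriction norm, one chooses by definition an extension $\psi \in X^{s,b}_\pm$ with $\psi = \phi$ on $S_T$ and $\nrm{\psi}_{X^{s,b}_\pm} \le 2 \nrm{\phi}_{X^{s,b}_\pm(S_T)}$; the global embedding then places $\psi$, hence its restriction $\phi$, in the desired space with the corresponding bound. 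By density of $\calS(\bbR^{1+2})$ in $X^{s,b}_\pm$ and completeness of $C_t(\bbR, H^s_x)$ under uniform-in-$t$ convergence, the global embedding and its quantitative bound need only be verified for Schwartz $\psi$.

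For Schwartz $\psi$ I would write out the space-time Fourier inversion formula and substitute the modulation variable $\rho = \tau \pm \abs{\xi}$. This realizes $\psi$ as a superposition of modulated free half-waves,
\begin{equation*}
\psi(t, \cdot) = c \int_{\bbR} e^{i t \rho} \, e^{\mp i t \abs{\nb}} g_\rho \, \ud \rho, \qquad \widehat{g_\rho}(\xi) := \widetilde{\psi}(\rho \mp \abs{\xi}, \xi),
\end{equation*}
where $e^{\mp i t \abs{\nb}}$ is the $(\mp)$-half-wave propagator from \S\ref{subsec:prelim:Xsb}. Since $e^{\mp i t \abs{\nb}}$ is unitary and strongly continuous on $H^s_x$, and $e^{it\rho}$ is continuous in $t$, each integrand is a continuous $H^s_x$-valued function of $t$ with norm $\nrm{g_\rho}_{H^s_x}$; thus the entire argument reduces to bounding $\int_{\bbR} \nrm{g_\rho}_{H^s_x}\,\ud\rho$.

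To estimate this integral I would split $1 = (1+\abs{\rho})^{b}(1+\abs{\rho})^{-b}$ and apply Cauchy--Schwarz in $\rho$: the factor $\big(\int_{\bbR} (1+\abs{\rho})^{-2b}\,\ud\rho\big)^{1/2}$ is finite precisely because $b > 1/2$, while unfolding $\nrm{g_\rho}_{H^s_x}^2 = \int (1+\abs{\xi})^{2s} \abs{\widetilde{\psi}(\rho \mp \abs{\xi}, \xi)}^2 \, \ud\xi$ and changing variables back to $\tau = \rho \mp \abs{\xi}$ (for each fixed $\xi$) turns the remaining factor into exactly $\nrm{\psi}_{X^{s,b}_\pm}$. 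This yields $\int_{\bbR} \nrm{g_\rho}_{H^s_x}\,\ud\rho \aleq \nrm{\psi}_{X^{s,b}_\pm}$, after which dominated convergence for Bochner integrals gives both the continuity of $t \mapsto \psi(t) \in H^s_x$ and the bound $\sup_t \nrm{\psi(t)}_{H^s_x} \aleq \nrm{\psi}_{X^{s,b}_\pm}$, completing the proof.

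The only mildly delicate points are the Fubini--Tonelli justification for the unfolding and change of variables, the strong continuity of $e^{\mp i t \abs{\nb}}$ on $H^s_x$ (immediate from dominated convergence on the Fourier side), and the vector-valued dominated convergence argument; none of these is a real obstacle, this being a textbook property of Bourgain-type spaces (cf. \cite[Section 4]{DAncona:2007ex}). The hypothesis $b > 1/2$ is genuinely used in exactly one place: the convergence of $\int_{\bbR} (1+\abs{\rho})^{-2b}\,\ud\rho$.
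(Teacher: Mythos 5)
Your proof is correct, and it is essentially the standard argument that the paper itself defers to (it cites \cite[Section 4]{DAncona:2007ex} and \cite[\S 2.6]{MR2233925} rather than proving the lemma): foliate the space-time Fourier support by modulation, write $\psi$ as an $L^1_\rho$-superposition of modulated free half-waves, and apply Cauchy--Schwarz in $\rho$, which is where $b>1/2$ enters. One cosmetic remark: the operator $e^{\mp i t\abs{\nb}}$ appearing in your representation is, in the paper's notation $e^{\pm(-i)t\abs{\nb}}$, the $(\pm)$-half-wave propagator (the one matching the weight $\brk{\tau\pm\abs{\xi}}$ of $X^{s,b}_{\pm}$), not the $(\mp)$ one; the formula you wrote is right, only the label is off.
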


\begin{lemma}[Estimate for homogeneous waves] \label{lem:XsbHom}
For every $T > 0$, $s \in \bbR$, $b > 1/2$ and $\phi_{0} \in H^{s}_{x}$, the following estimate holds.
\begin{equation*}
	\nrm{e^{\pm(-i) t \abs{\nb}} \phi_{0}}_{X^{s,b}_{\pm}(S_{T})} \aleq \nrm{\phi_{0}}_{H^{s}_{x}}, \quad
	\nrm{e^{\pm(-i) t \brk{\nb}} \phi_{0}}_{X^{s,b}_{\pm}(S_{T})} \aleq \nrm{\phi_{0}}_{H^{s}_{x}}.
\end{equation*}
\end{lemma}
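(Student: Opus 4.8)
\textbf{Proof proposal for Lemma \ref{lem:XsbHom}.}

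The plan is to prove the estimate in the standard way for $X^{s,b}$ restriction norms: first establish the global estimate for a time-truncated version of the homogeneous wave, and then invoke the definition of the restriction norm. Concretely, fix a cutoff $\chi \in C_c^\infty(\bbR)$ with $\chi \equiv 1$ on $(-1,1)$ and $\supp \chi \subset (-2,2)$, and for $T \leq 1$ (the case $T > 1$ being handled by covering $(-T,T)$ with $O(T)$ unit intervals and using translation invariance, or simply by absorbing $T$ into the implicit constant) set $\chi_T(t) := \chi(t/T)$. Since $\chi_T(t) e^{\pm(-i)t\abs{\nb}}\phi_0$ agrees with $e^{\pm(-i)t\abs{\nb}}\phi_0$ on $S_T$, by the definition of $\nrm{\cdot}_{X^{s,b}_\pm(S_T)}$ it suffices to bound
\begin{equation*}
	\nrm{\chi_T(t)\, e^{\pm(-i)t\abs{\nb}}\phi_0}_{X^{s,b}_\pm} \aleq \nrm{\phi_0}_{H^s_x}.
\end{equation*}

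The key computation is the space-time Fourier transform of $\chi_T(t)e^{\pm(-i)t\abs{\nb}}\phi_0$. Taking the spatial Fourier transform first turns $e^{\pm(-i)t\abs{\nb}}$ into multiplication by $e^{\mp i t\abs{\xi}}$, and then the temporal Fourier transform gives $\widehat{\chi_T}(\tau \pm \abs{\xi})\, \widehat{\phi_0}(\xi)$, where $\widehat{\chi_T}(\tau) = T\,\widehat{\chi}(T\tau)$. Hence the $X^{s,b}_\pm$ norm squared equals
\begin{equation*}
	\int\!\!\int (1+\abs{\xi})^{2s}(1+\abs{\tau\pm\abs{\xi}})^{2b}\, T^2 \abs{\widehat{\chi}(T(\tau\pm\abs{\xi}))}^2 \abs{\widehat{\phi_0}(\xi)}^2 \, \ud\tau\, \ud\xi.
\end{equation*}
For fixed $\xi$, substitute $\sigma = T(\tau \pm \abs{\xi})$, so $\ud\tau = T^{-1}\ud\sigma$, and the inner integral becomes $T\int (1+\abs{\sigma}/T)^{2b}\abs{\widehat{\chi}(\sigma)}^2\,\ud\sigma$. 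Since $T \leq 1$, we have $1 + \abs{\sigma}/T \leq T^{-1}(1+\abs{\sigma})$, so this is $\aleq T^{1-2b}\int (1+\abs{\sigma})^{2b}\abs{\widehat{\chi}(\sigma)}^2\,\ud\sigma = C_{b,\chi}\, T^{1-2b}$, a finite constant because $\widehat{\chi}$ is Schwartz. This yields $\nrm{\chi_T(t)e^{\pm(-i)t\abs{\nb}}\phi_0}_{X^{s,b}_\pm} \aleq T^{1/2-b}\nrm{\phi_0}_{H^s_x}$, which in particular gives the claimed bound (with an implicit constant that may be taken independent of $T \leq 1$ since $1/2 - b < 0$ only helps; if one wants a clean constant one just notes $T^{1/2-b}$ is bounded on any fixed interval $T \in (0,T_0]$ — here, though, we must be slightly careful and allow the constant to depend on $T_0$, or restrict to $T \leq 1$ as the statement's quantifier over all $T>0$ permits after the $O(T)$-covering reduction). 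The Klein-Gordon case $e^{\pm(-i)t\brk{\nb}}\phi_0$ is identical after replacing $\abs{\xi}$ by $\brk{\xi}$ everywhere; the Fourier support is shifted to $\set{\tau = \mp\brk{\xi}}$ and the weight $(1+\abs{\tau\pm\abs{\xi}})^b$ in the definition of $X^{s,b}_\pm$ — wait, the Klein-Gordon $X^{s,b}_\pm$ norm should use $\brk{\nb}$ in the modulation weight as well; in any case the same substitution argument applies verbatim.

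The only mild subtlety — and the closest thing to an obstacle — is bookkeeping with the cutoff: one must check that $\widehat{\chi_T}$ decays fast enough to absorb the polynomial weight $(1+\abs{\tau\pm\abs{\xi}})^{2b}$, which is immediate from $\chi \in C_c^\infty$ (so $\widehat{\chi}$ is Schwartz), and that the power of $T$ produced is nonpositive in the relevant range, which holds precisely because $b > 1/2$. No dispersive or oscillatory-integral input is needed; this is purely a Fourier-side and substitution argument. I would present it in roughly the form above, citing \cite[Section 4]{DAncona:2007ex} or \cite[\S 2.6]{MR2233925} for readers who want the details spelled out in full generality.
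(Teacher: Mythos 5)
Your overall strategy (truncate in time, compute the space-time Fourier transform, substitute in the modulation variable) is the standard one, and it is essentially what the references cited alongside this lemma do --- except that the standard proof uses a cutoff of \emph{fixed} width, and your decision to rescale the cutoff to width $T$ introduces a genuine error. Your computation correctly produces the factor $T^{1/2-b}$, but your reading of it is backwards: since $b>1/2$ the exponent $1/2-b$ is negative, so $T^{1/2-b}=T^{-(b-1/2)}\to\infty$ as $T\to 0^{+}$; it is \emph{not} ``bounded on any fixed interval $T\in(0,T_0]$'' and it does not ``only help''. As written, your argument therefore gives a constant that degenerates precisely in the regime of small $T$ that matters for local well-posedness (and the paper explicitly uses that the implicit constant here is independent of $T$ when estimating the homogeneous parts in \S\ref{subsec:CSD:setup}). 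The degeneration is an artifact of the rescaling: as $T\to 0$ the Fourier transform of $\chi_T$ spreads out, and the weight $(1+\abs{\tau\pm\abs{\xi}})^{2b}$ then costs $T^{-2(b-1/2)}$.

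The fix is immediate: for $T\le 1$ do not rescale at all. The function $\chi(t)\, e^{\pm(-i)t\abs{\nb}}\phi_0$, with your fixed $\chi\equiv 1$ on $(-1,1)\supset(-T,T)$, is already an admissible extension from $S_T$, and your own computation (run with the unscaled $\chi$) bounds its global $X^{s,b}_{\pm}$ norm by $C_{b,\chi}\nrm{\phi_0}_{H^s_x}$, uniformly in $T\le 1$; for $T>1$ one may take $\chi(t/T)$ and accept a $T$-dependent constant, which is unavoidable since the free wave is not globally in $X^{s,b}_{\pm}$. One further small point you flagged but waved away: for the Klein--Gordon flow the paper's $X^{s,b}_{\pm}$ weight really is $(1+\abs{\tau\pm\abs{\xi}})^{b}$ with $\abs{\xi}$ rather than $\brk{\xi}$, but since $0\le\brk{\xi}-\abs{\xi}\le 1$ one has $(1+\abs{\tau\pm\abs{\xi}})\aleq(1+\abs{\tau\pm\brk{\xi}})$, after which your substitution with $\sigma=\tau\pm\brk{\xi}$ goes through verbatim.
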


\begin{lemma}[Estimate for inhomogeneous waves] \label{lem:XsbInhom}
Let $-1/2 < b < b'$, and consider an inhomogeneous half-wave equation $(-i \rd_{t} \pm \abs{\nb}) \phi_{\pm} = F_{\pm}$ with zero data at $t=0$. If $F \in X^{s,b'-1}_{\pm}$, then there exists a unique solution $\phi_{\pm} \in X^{s,b}_{\pm}$ to this equations such that for every $T > 0$ the following estimate holds.
\begin{equation*}
	\nrm{\phi_{\pm}}_{X^{s,b}_{\pm} (S_{T})} \aleq T^{b' - b} \nrm{F_{\pm}}_{X^{s, b'-1}_{\pm}(S_{T})}.
\end{equation*}
The same statement holds for the equation $(-i \rd_{t} \pm \brk{\nb}) \phi_{\pm} = F_{\pm}$ as well.
\end{lemma}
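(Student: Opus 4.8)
This is a standard $X^{s,b}$ estimate; I sketch the proof (compare \cite[Section 4]{DAncona:2007ex} and \cite[\S2.6]{MR2233925}), treating the regime $1/2 < b < b'$ relevant to the applications below, in which it is cleanest. Since $\brk{\nb}^{s}$ commutes with the half-wave operator and acts isometrically between the weighted spaces in question, I may take $s=0$; and it suffices to treat the $(+)$-equation with $\abs{\nb}$, the $(-)$-case following from the reflection $t \mapsto -t$, and the Klein--Gordon cases being verbatim with $\abs{\xi}$ replaced by $\brk{\xi}$ (which, if anything, helps, as $\brk{\xi} \geq 1$). The solution is given by Duhamel's formula $(WF)(t) := \int_{0}^{t} e^{-i(t-s)\abs{\nb}} F(s) \, \ud s$ (up to an irrelevant constant); uniqueness is immediate, a difference of two solutions solving the homogeneous equation with zero data. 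Fix a smooth cutoff $\chi$ with $\chi \equiv 1$ on $(-1,1)$, put $\chi_{T}(t) := \chi(t/T)$, and assume $T \leq 1$ (the case $T > 1$ is trivial). For the restriction estimate, given $F$ on $S_{T}$, choose an extension $\widetilde F \in X^{0,b'-1}_{+}$ with $\nrm{\widetilde F}_{X^{0,b'-1}_{+}} \aleq \nrm{F}_{X^{0,b'-1}_{+}(S_{T})}$ and set $\phi := \chi \cdot W(\chi_{T}\widetilde F)$. Since $\chi_{T} \equiv 1$ and $\chi \equiv 1$ on $(-T,T)$, and the Duhamel integral at time $\abs{t}<T$ only sees $\abs{s} < T$, $\phi$ coincides with the actual solution on $S_{T}$; hence $\nrm{\phi}_{X^{0,b}_{+}(S_{T})} \leq \nrm{\phi}_{X^{0,b}_{+}}$, and it remains to bound the latter.

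\textbf{A global estimate.} I claim that for $b > 1/2$ one has $\nrm{\chi\, W G}_{X^{0,b}_{+}} \aleq \nrm{G}_{X^{0,b-1}_{+}}$. Decompose the space-time Fourier transform of $G$ by modulation: $\widetilde G = \widetilde G_{\mathrm{lo}} + \widetilde G_{\mathrm{hi}}$ according as $\abs{\tau + \abs{\xi}} \leq 1$ or $\geq 1$. Evaluating the time integral on the Fourier side (exactly as in the proof of Lemma \ref{lem:mDuhamel}) gives $W G_{\mathrm{hi}} = \Phi^{(1)} - \Phi^{(2)}$, where $\widetilde{\Phi^{(1)}}(\tau,\xi) = c\, \widetilde G_{\mathrm{hi}}(\tau,\xi)/(\tau + \abs{\xi})$ --- nonsingular, since $\abs{\tau + \abs{\xi}} \geq 1$ on its support, so that division by the symbol gains a full modulation power, $\nrm{\Phi^{(1)}}_{X^{0,b}_{+}} \aleq \nrm{G}_{X^{0,b-1}_{+}}$ --- while $\Phi^{(2)} = e^{-it\abs{\nb}} h$ is a free half-wave with $\widehat h(\xi) = c\int \widetilde G_{\mathrm{hi}}(\tau,\xi)/(\tau + \abs{\xi})\,\ud\tau$; by Cauchy--Schwarz in $\tau$, $\nrm{h}_{L^{2}_{x}} \aleq \nrm{G}_{X^{0,b-1}_{+}}$, the relevant $\tau$-integral $\int_{\abs{\sigma}\geq 1} \brk{\sigma}^{-2(b-1)}\sigma^{-2}\,\ud\sigma$ being finite precisely because $b-1 > -1/2$; Lemma \ref{lem:XsbHom} then controls $\chi\Phi^{(2)}$. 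For the low-modulation piece, the dangerous factor $(\tau+\abs{\xi})^{-1}$ is cancelled using the smoothness of $z \mapsto (e^{iz}-1)/z$ --- i.e. a Taylor expansion of the Duhamel kernel, the same mechanism underlying Lemma \ref{lem:mDuhamel} --- which exhibits $\chi\, W G_{\mathrm{lo}}$ as an absolutely convergent series $\sum_{k\geq 1} c_{k}\,(t^{k}\chi(t))\,e^{-it\abs{\nb}}g_{k}$ with $\nrm{g_{k}}_{L^{2}_{x}} \aleq 2^{k}\nrm{G_{\mathrm{lo}}}_{L^{2}_{t,x}} \aleq 2^{k}\nrm{G}_{X^{0,b-1}_{+}}$ (the last inequality because the weight $\brk{\tau+\abs{\xi}}^{b-1}$ is $\sim 1$ on the support of $\widetilde G_{\mathrm{lo}}$); Lemma \ref{lem:XsbHom} applied term by term closes the claim.

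\textbf{Extracting $T^{b'-b}$.} The gain comes from the sharp time-cutoff multiplier estimate $\nrm{\chi_{T} u}_{X^{0,\beta}_{+}} \aleq T^{\beta'-\beta}\nrm{u}_{X^{0,\beta'}_{+}}$, valid for $-1/2 < \beta \leq \beta' < 1/2$ (a short Fourier computation). Applying it with $(\beta,\beta') = (b-1,b'-1)$ --- legitimate since $b-1 > -1/2$ and, harmlessly, $b'-1 < 1/2$ in the relevant range --- gives $\nrm{\chi_{T}\widetilde F}_{X^{0,b-1}_{+}} \aleq T^{b'-b}\nrm{\widetilde F}_{X^{0,b'-1}_{+}}$. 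Combining with the global estimate applied to $G := \chi_{T}\widetilde F$ yields $\nrm{\phi}_{X^{0,b}_{+}} \aleq \nrm{\chi_{T}\widetilde F}_{X^{0,b-1}_{+}} \aleq T^{b'-b}\nrm{\widetilde F}_{X^{0,b'-1}_{+}} \aleq T^{b'-b}\nrm{F}_{X^{0,b'-1}_{+}(S_{T})}$, which is the asserted bound (and incidentally shows $\phi \in X^{0,b}_{+}$); the Klein--Gordon statement follows by the same argument.

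\textbf{Main obstacle.} The delicate part is the low-modulation contribution to the global estimate: one must check that the singular factor $(\tau+\abs{\xi})^{-1}$ is genuinely cancelled by the $\int_{0}^{t}$-structure of Duhamel's formula and that the resulting expansion converges --- this is exactly the integration-by-parts phenomenon exploited in Lemma \ref{lem:mDuhamel}. A secondary point is the exponent bookkeeping: the symbol division needs $b-1 > -1/2$, and the modulation indices fed into the time-cutoff estimate must lie in $(-1/2,1/2)$; pushing the statement down to $-1/2 < b \leq 1/2$ requires additional care (and is not needed in this paper). Everything else reduces to routine Fourier-analytic manipulation.
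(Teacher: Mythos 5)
The paper does not prove this lemma itself but defers to \cite[Section 4]{DAncona:2007ex} and \cite[\S 2.6]{MR2233925}, and your argument is precisely the standard proof found there: modulation decomposition of the Duhamel term, symbol division at high modulation, Taylor expansion of $(e^{iz}-1)/z$ at low modulation, and the sharp time-cutoff estimate to extract $T^{b'-b}$. It is correct in the regime $1/2<b<b'<1$ you treat, which (as you note) is the only one used in the paper, so there is nothing to add.
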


Finally, we end this subsection with a simple yet useful observation. Suppose that $N(\phi_{1}, \cdots, \phi_{k})$ is a $k$-linear form such that if $\phi_{j} = \phi'_{j}$ on $S_{T}$, then $N(\phi_{1}, \cdots, \phi_{k}) = N(\phi'_{1}, \cdots, \phi'_{k})$. Then for any choice of signs, $s$'s and $b$'s, it is clear, by the definition of $X^{s,b}_{\pm}(S_{T})$ and an approximation argument, that a global estimate
\begin{equation*}
	\nrm{N(\phi_{1}, \cdots, \phi_{k})}_{X^{s_{0}, b_{0}}_{\pm_0}} \leq C \prod_{j=1}^{k} \nrm{\phi_{j}}_{X^{s_{j}, b_{j}}_{\pm_j}}
\end{equation*}
for Schwartz $\phi_{1}, \cdots, \phi_{k} \in \calS(\bbR^{1+d})$ implies the local-in-time estimate
\begin{equation*}
	\nrm{N(\phi_{1}, \cdots, \phi_{k})}_{X^{s_{0}, b_{0}}_{\pm_0}(S_{T})} \leq C \prod_{j=1}^{k} \nrm{\phi_{j}}_{X^{s_{j}, b_{j}}_{\pm_j}(S_{T})}
\end{equation*}
for $\phi_{j} \in X^{s_{j}, b_{j}}_{\pm}(S_{T})$, with the same constant $C$. Thus, for the purpose of proving nonlinear estimates, it is usually possible to work just with the original $X^{s,b}_{\pm}, \,H^{s,b}$ norms and Schwartz functions.

\subsection{Bilinear null forms}
Given a bilinear form of two half-waves $\phi_{1}, \phi_{2}$ (henceforth referred to as the \emph{inputs}) on the right-hand side of a wave equation (the left-hand side being $\Box \phi$ or $(i \rd_{t} \pm \abs{\nb})\phi$), the most dangerous interaction happens when the two inputs give rise to an output which is close to the light cone in the space-time Fourier space; such an interaction is referred to as a \emph{resonant} interaction. 

As the space-time Fourier transform of a half-wave is supported on the light cone, we see that this happens \emph{if and only if} the two inputs are collinear. This means that if a bilinear form $\calB(\phi_{1}, \phi_{2})$ possesses cancellation in the angle between the two inputs, then we expect it to exhibit better regularity properties. This notion goes under the name of \emph{null structure}, due to Klainerman-Machedon \cite{Klainerman:ei}, and a bilinear form which has such structure is called a \emph{null form}. Note that if the two half-waves are of different signs, then the relevant angle is $\angle(\xi_{1}, - \xi_{2})$, not $\angle(\xi_{1}, \xi_{2})$. With these considerations in mind, we make the following definition, which is useful for treating different null forms in a unified fashion.

\begin{definition}[Abstract bilinear null form] \label{def:absNf}
Given $\xi_{1}, \xi_{2} \in \bbR^{2}$, let us denote the (smaller) angle between $\xi_{1}, \xi_{2}$ by $\angle(\xi_{1}, \xi_{2})$. For arbitrary signs $\pm_{1}, \pm_{2}$ and $\ell \in \bbR$, we define the \emph{abstract bilinear null form} of order $\ell$, denoted by $\mathfrak{B}^{\ell}_{\pm_{1}, \pm_{2}}$, by the following formula.
\begin{equation*}
	\sptF{\mathfrak{B}^{\ell}_{\pm_{1}, \pm_{2}}(\phi_{1}, \phi_{2})} (\tau_{0}, \xi_{0}) := \int_{\tau_{0} = \tau_{1} + \tau_{2}, \xi_{0} = \xi_{1} + \xi_{2}} \abs{\angle(\pm_{1} \xi_{1}, \pm_{2} \xi_{2})}^{\ell} \, \abs{\widetilde{\phi_{1}}}(\tau_{1}, \xi_{1}) \abs{\widetilde{\phi_{2}}}(\tau_{2}, \xi_{2}) \, \ud \tau_{1} \ud \xi_{1}.
\end{equation*}
The inputs $\phi_{1}, \phi_{2}$ may be any $\bbC^{n}$-valued functions.
\end{definition}

The order $\ell$ refers to the power of the angle between the two inputs in the symbol of $\mathfrak{B}^{\ell}_{\pm_{1}, \pm_{2}}$. Thus the higher the order, the `better' the cancellation is, in the sense that
\begin{equation*}
	\calB^{\ell}_{\pm_{1}, \pm_{2}}(\phi_{1}, \phi_{2}) \Fleq \calB^{k}_{\pm_{1}, \pm_{2}}(\phi_{1}, \phi_{2})
\end{equation*}
provided that $k \leq \ell$.

Let us consider bilinear forms $\nf^{\mu \nu}_{\pm_{1}, \pm_{2}}, \nf^{0}_{\pm_{1}, \pm_{2}}$ defined by
\begin{align*}
	& \nf^{\mu \nu}_{\pm_{1}, \pm_{2}} (\phi_{1, \pm_{1}}, \phi_{2, \pm_{2}}) 
	:= \mR^{\mu}_{\pm_{1}} \phi_{1, \pm_{1}} \mR^{\nu}_{\pm_{2}} \phi_{2, \pm_{2}} - \mR^{\nu}_{\pm_{1}} \phi_{1, \pm_{1}} \mR^{\mu}_{\pm_{2}} \phi_{2, \pm_{2}}, \\
	& \nf^{0}_{\pm_{1}, \pm_{2}} (\phi_{1, \pm_{1}}, \phi_{2, \pm_{2}}) 
	:= \mR_{\pm_{1},\mu} \phi_{1, \pm_{1}} \mR^{\mu}_{\pm_{2}} \phi_{2, \pm_{2}}.
\end{align*}
The notation is chosen according to the similarity of these two bilinear forms to the standard null forms $Q_{\mu \nu}, Q_{0}$ (see \cite{Klainerman:ei}), which are defined by replacing the modified Riesz transforms $\mR^{\mu}_{\pm}$ by $\rd^{\mu}$.
The following lemma states that $\nf^{\mu \nu}_{\pm_{1}, \pm_{2}}$, $\nf^{0}_{\pm_{1}, \pm_{2}}$, as well as a certain bilinear form of two spinors, have null structure in the sense that they are dominated by a abstract null form $\calB^{\ell}_{\pm_{1}, \pm_{2}}$ in the space-time Fourier space. These encompass, in fact, all null forms which arise in this work.

\begin{lemma} \label{lem:nullform}
Let $\phi_{1}, \phi_{2}$ be complex-valued Schwartz functions, and $\psi_{1}, \psi_{2}$ Schwartz spinor fields (i.e. $\bbC^{2}$-valued functions). Then the following statements hold.
\begin{enumerate}
\item $\displaystyle{\nf^{\mu \nu}_{\pm_{1}, \pm_{2}}(\phi_{1}, \phi_{2}) \Fleq \calB^{1}_{\pm_{1}, \pm_{2}}(\phi_{1}, \phi_{2})}$. \\
\item $\displaystyle{\nf^{0}_{\pm_{1}, \pm_{2}}(\phi_{1}, \phi_{2}) \Fleq \calB^{2}_{\pm_{1}, \pm_{2}}(\phi_{1}, \phi_{2})}$. \\
\item $\displaystyle{(\Pi_{\pm_{1}} \psi_{1})^{\dagger} (\Pi_{\mp_{2}} \alp^{\mu} \Pi_{\pm_{2}} \psi_{2}) \Fleq \calB^{1}_{\pm_{1}, \pm_{2}}(\psi_{1}, \psi_{2})}$. \\
\end{enumerate}
\end{lemma}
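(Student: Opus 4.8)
The plan is to reduce all three statements to a single elementary fact: for unit vectors, the quantities that appear (differences of products of components, or a "$\Pi$-twisted'' bilinear form) are pointwise bounded by the angle raised to the appropriate power. More precisely, after taking the space-time Fourier transform, each bilinear form in the statement becomes a convolution whose symbol, evaluated at frequencies $(\tau_1,\xi_1)$ and $(\tau_2,\xi_2)$, is a function of $\xi_1/\abs{\xi_1}$ and $\xi_2/\abs{\xi_2}$ only (since the $\mR^\mu_\pm$ and $\Pi_\pm$ have symbols homogeneous of degree $0$). So it suffices to prove three pointwise symbol bounds: writing $\omega_j = \pm_j \xi_j / \abs{\xi_j}$ and $\theta = \angle(\pm_1\xi_1, \pm_2\xi_2)$, one must show (1) $\abs{\omega_1^\mu \omega_2^\nu - \omega_1^\nu \omega_2^\mu} \lesssim \abs{\theta}$ with the convention $\omega_j^0 = -1$; (2) $\abs{\eta_{\mu\nu} \omega_1^\mu \omega_2^\nu} = \abs{1 - \omega_1 \cdot \omega_2} \lesssim \abs{\theta}^2$; and (3) the spinor symbol bound. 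Then, taking absolute values inside the convolution integral and comparing with Definition \ref{def:absNf}, the $\Fleq$ relations follow immediately.

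For step (1): the $0$-$j$ components of $\omega_1^\mu \omega_2^\nu - \omega_1^\nu \omega_2^\mu$ are (up to sign) $\omega_1^j - \omega_2^j$, i.e. a component of $\omega_1 - \omega_2$, which has length $2\abs{\sin(\theta/2)} \lesssim \abs{\theta}$; the $i$-$j$ component is the $2$D cross product $\omega_1^i \omega_2^j - \omega_1^j \omega_2^i = \sin\theta$ (in magnitude), which is also $\lesssim \abs{\theta}$. For step (2): $1 - \omega_1 \cdot \omega_2 = 1 - \cos\theta = 2\sin^2(\theta/2) \lesssim \abs{\theta}^2$. These are the standard null-form angle estimates; the only subtlety is keeping track of the sign $\pm_j$ in the definition of $\mR^j_{\pm_j} = \mp_j (\rd_j / i\abs{\nb})$, so that the effective unit vector is $\pm_j \xi_j/\abs{\xi_j}$ and the relevant angle is $\angle(\pm_1\xi_1, \pm_2\xi_2)$, exactly as in the abstract null form — this is why the modified (rather than ordinary) Riesz transforms were introduced.

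For step (3), the main point, I would use the algebraic identity \eqref{eq:commAlpPi}, namely $\alp^\mu \Pi_{\pm} = \Pi_{\mp} \alp^\mu \Pi_{\pm} - \mR^\mu_\pm \Pi_\pm$. Applying this with the sign $\pm_2$ gives $\Pi_{\mp_2} \alp^\mu \Pi_{\pm_2} = \alp^\mu \Pi_{\pm_2} + \mR^\mu_{\pm_2} \Pi_{\pm_2}$. Hence
\begin{equation*}
	(\Pi_{\pm_1} \psi_1)^\dagger (\Pi_{\mp_2} \alp^\mu \Pi_{\pm_2} \psi_2) = (\Pi_{\pm_1} \psi_1)^\dagger \alp^\mu (\Pi_{\pm_2} \psi_2) + (\Pi_{\pm_1}\psi_1)^\dagger \mR^\mu_{\pm_2}(\Pi_{\pm_2}\psi_2).
\end{equation*}
On the Fourier side, $\Pi_{\pm_1}\psi_1$ has symbol a projection onto the range of $\Pi(\pm_1 \xi_1)$ and similarly for $\Pi_{\pm_2}\psi_2$; using self-adjointness of $\alp^\mu$ and $\Pi_{\pm_1}$, rewrite the first term so that $\alp^\mu$ is sandwiched between $\Pi(\pm_1\xi_1)$ on the left and $\Pi(\pm_2\xi_2)$ on the right, and then invoke \eqref{eq:dirac:1} (or the projection relations for $\Pi$) to show that $\Pi(\pm_1\xi_1)\,\alp^\mu\,\Pi(\pm_2\xi_2)$ has operator norm $\lesssim \abs{\theta}$ — this is the standard fact that the spinorial null structure "sees'' the angle, coming from $\Pi(\omega_1)\Pi(-\omega_2) \to 0$ as $\omega_1 \to \omega_2$ together with $\alp^\mu\Pi(\omega) = \Pi(-\omega)\alp^\mu + \omega^\mu$. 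The second term is controlled by part (1)/(2) of the lemma (it is $\mR^\mu_{\pm_2}$ of a product, i.e. of order $\lesssim \abs{\theta}^0$ trivially but one does better by symmetrizing, though order $1$ is all that is claimed). Taking absolute values inside the convolution and comparing with Definition \ref{def:absNf} gives (3).

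I expect step (3) to be the main obstacle, specifically the bookkeeping of which $\Pi(\pm\xi)$ lands where after the adjoints are taken, and verifying $\nrm{\Pi(\omega_1)\alp^\mu\Pi(\omega_2)} \lesssim \angle(\omega_1,\omega_2)$ cleanly for all $\mu$ including $\mu=0$ (where it is trivial since $\alp^0 = \Id$ and $\Pi(\omega_1)\Pi(\omega_2)$ with $\omega_1 \neq \omega_2$ of the same "orientation'' need not vanish — one must be careful that here both projections are of the form $\Pi(+\cdot)$ only after absorbing signs, so the relevant vanishing is $\Pi(\omega_1)\Pi(-\omega_1) = 0$). Steps (1) and (2) are routine trigonometry once the sign conventions are pinned down.
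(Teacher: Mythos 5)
Parts (1) and (2) of your argument are correct and coincide with the paper's: compute the symbol of the convolution, observe that it depends only on the unit vectors $\pm_{j}\xi_{j}/\abs{\xi_{j}}$, and bound $\abs{\omega_{1}-\omega_{2}}$, $\abs{\omega_{1}\wedge\omega_{2}}$ by $\angle$ and $1-\omega_{1}\cdot\omega_{2}$ by $\angle^{2}$.

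Part (3), however, has a genuine gap, and it is exactly at the point you flagged as a worry. Your first move is to expand $\Pi_{\mp_{2}}\alp^{\mu}\Pi_{\pm_{2}} = \alp^{\mu}\Pi_{\pm_{2}} + \mR^{\mu}_{\pm_{2}}\Pi_{\pm_{2}}$ via \eqref{eq:commAlpPi} and then to bound the two resulting terms separately. Neither term satisfies the claimed $O(\angle)$ bound. For the first term you assert $\nrm{\Pi(\omega_{1})\alp^{\mu}\Pi(\omega_{2})}\aleq\angle(\omega_{1},\omega_{2})$; this is false. Already for $\mu=0$ one has $\Pi(\omega)\alp^{0}\Pi(\omega)=\Pi(\omega)\neq 0$, and for spatial $\mu$ the identity \eqref{eq:dirac:1} gives
\begin{equation*}
\Pi(\omega_{1})\alp^{i}\Pi(\omega_{2}) = \Pi(\omega_{1})\Pi(-\omega_{2})\alp^{i} + \omega_{2}^{i}\,\Pi(\omega_{1}),
\end{equation*}
whose second summand is $O(1)$, not $O(\angle)$, in the collinear regime $\omega_{1}=\omega_{2}$. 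The second term of your decomposition, $(\Pi_{\pm_{1}}\psi_{1})^{\dagger}\mR^{\mu}_{\pm_{2}}\Pi_{\pm_{2}}\psi_{2}$, fails for the same reason: its matrix symbol contains $\Pi(\pm_{1}\xi_{1})\Pi(\pm_{2}\xi_{2})$ with a nonvanishing scalar prefactor, and $\Pi(\omega)\Pi(\omega)=\Pi(\omega)$. The $O(1)$ pieces of the two terms cancel only when recombined, i.e.\ your decomposition undoes precisely the cancellation that the lemma asserts, and bounding the terms separately cannot work. The correct route, which is the paper's, is to keep $\Pi_{\mp_{2}}\alp^{\mu}\Pi_{\pm_{2}}$ intact, use Hermiticity of $\Pi(\pm_{1}\xi_{1})$ to move it onto the second factor, obtaining the matrix $\Pi(\pm_{1}\xi_{1})\,\Pi(\mp_{2}\xi_{2})\,\alp^{\mu}\Pi(\pm_{2}\xi_{2})$, and then apply the D'Ancona--Foschi--Selberg estimate $\abs{\Pi(\xi_{1})\Pi(-\xi_{2})z}\aleq\abs{z}\,\angle(\xi_{1},\xi_{2})$ (Lemma \ref{lem:angle4Dirac}) to the product $\Pi(\pm_{1}\xi_{1})\Pi(-(\pm_{2}\xi_{2}))$, bounding $\alp^{\mu}\Pi(\pm_{2}\xi_{2})$ trivially by $O(1)$. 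You cite the relevant vanishing $\Pi(\omega)\Pi(-\omega)=0$ but apply it after the expansion has already separated the two projections whose product carries the angle.
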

\begin{proof} 
Let us begin with (1). In the case $\mu = 0$, $\nu=i$ (where $i$ runs over $1,2$), we compute
 \begin{align*}
	&\sptF{\nf^{0i}_{\pm_{1},\pm_{2}}(\phi_{1,\pm_{1}}, \phi_{2,\pm_{2}})}(\tau_{0}, \xi_{0}) \\
	&\quad = \int_{\tau_{0} = \tau_{1} + \tau_{2}, \xi_{0} = \xi_{1} + \xi_{2}} \underbrace{\bb( \frac{\pm_{1} \xi_{1}^{i}}{\abs{\xi_{1}}}  - \frac{\pm_{2} \xi_{2}^{i}}{\abs{\xi_{2}}} \bb)}_{O(\angle(\pm_{1} \xi_{1}, \pm_{2} \xi_{2}))} 
	\widetilde{\phi_{1,\pm_{1}}}(\tau_{1}, \xi_{1}) \widetilde{\phi_{2,\pm_{2}}}(\tau_{2}, \xi_{2}) \, \ud \tau_{1} \ud \xi_{1}, 
\end{align*}
whereas in the case $\mu = i$, $\nu=j$ ($i, j$ run over $1,2$), 
\begin{align*}
	&\sptF{\nf^{ij}_{\pm_{1},\pm_{2}}(\phi_{1,\pm_{1}}, \phi_{2,\pm_{2}})}(\tau_{0}, \xi_{0}) \\
	&\quad = \int_{\tau_{0} = \tau_{1} + \tau_{2}, \xi_{0} = \xi_{1} + \xi_{2}} \underbrace{\bb( \frac{(\pm_{1} \xi_{1}^{i})(\pm_{2} \xi_{2}^{j})  - (\pm_{1} \xi_{1}^{j})(\pm_{2} \xi_{2}^{i})}{\abs{\xi_{1}} \abs{\xi_{2}}} \bb)}_{O(\angle(\pm_{1} \xi_{1}, \pm_{2} \xi_{2}))} 
	\widetilde{\phi_{1,\pm_{1}}}(\tau_{1}, \xi_{1}) \widetilde{\phi_{2,\pm_{2}}}(\tau_{2}, \xi_{2}) \, \ud \tau_{1} \ud \xi_{1}.
\end{align*}
This proves (1). For (2), we compute
\begin{align*}
	&\sptF{\nf^{0}_{\pm_{1},\pm_{2}}(\phi_{1,\pm_{1}}, \phi_{2,\pm_{2}})}(\tau_{0}, \xi_{0}) \\
	&\quad = \int_{\tau_{0} = \tau_{1} + \tau_{2}, \xi_{0} = \xi_{1} + \xi_{2}} \underbrace{\bb( 1 - \frac{(\pm_{1} \xi_{1}) \cdot (\pm_{2} \xi_{2})}{\abs{\xi_{1}} \abs{\xi_{2}}} \bb)}_{O(\angle(\pm_{1} \xi_{1}, \pm_{2} \xi_{2})^{2})} \widetilde{\phi_{1,\pm_{1}}}(\tau_{1}, \xi_{1}) \widetilde{\phi_{2,\pm_{2}}}(\tau_{2}, \xi_{2}) \, \ud \tau_{1} \ud \xi_{1}. 
\end{align*}
In order to prove (3), we need the following computation due to D'Ancona-Foschi-Selberg. 
\begin{lemma}[{\cite[Lemma 2]{DAncona:2007ex}}] \label{lem:angle4Dirac}
Let $z \in \bbC^{2}$ and $\xi_{1}, \xi_{2} \in \bbR^{2}$. Then the following inequality holds.
\begin{equation*}
	\abs{\Pi(\xi_{1}) \Pi(-\xi_{2}) z} \leq C \abs{z} \angle(\xi_{1}, \xi_{2}).
\end{equation*}
\end{lemma}

With this lemma in hand, we compute
\begin{align*}
& \sptF{(\Pi_{\pm_{1}} \psi_{1}^{\dagger} \Pi_{\mp_{2}} \alp^{\mu} \Pi_{\pm_{2}} \psi_{2})}(\tau_{0}, \xi_{0})  \\
&\quad =  \int_{\tau_{0} = \tau_{1} + \tau_{2}, \xi_{0} = \xi_{1} + \xi_{2}} (\Pi(\pm_{1} \xi_{1}) \widetilde{\psi_{1}}(\tau_{1}, \xi_{1}))^{\dagger} (\Pi(\mp_{2} \xi_{2}) \alp^{\mu} \Pi(\pm_{2} \xi_{2}) \widetilde{\psi_{2}}(\tau_{2}, \xi_{2})) \, \ud \tau_{1} \ud \xi_{1}  \\
&\quad =  \int_{\tau_{0} = \tau_{1} + \tau_{2}, \xi_{0} = \xi_{1} + \xi_{2}} (\widetilde{\psi_{1}}(\tau_{1}, \xi_{1}))^{\dagger} (\underbrace{\Pi(\pm_{1} \xi_{1}) \Pi(\mp_{2} \xi_{2}) \alp^{\mu} \, \Pi(\pm_{2} \xi_{2})}_{O(\angle(\pm_{1} \xi_{1}, \pm_{2} \xi_{2}))} \widetilde{\psi_{2}}(\tau_{2}, \xi_{2})) \, \ud \tau_{1} \ud \xi_{1}.
\end{align*}
where on the last line, we used Lemma \ref{lem:angle4Dirac}. \qedhere
\end{proof}

We remark that the null form (3) of Lemma \ref{lem:nullform} has been first introduced by P. D'Ancona, D. Foschi and S. Selberg \cite{DAncona:2007ex} in the context of the Dirac-Klein-Gordon equations. 

\section{Chern-Simon-Dirac equations: Proof of Theorem \ref{thm:lwp4CSD}}
In this section, we will prove our main theorem for the Chern-Simons-Dirac equations \eqref{eq:CSD:original}, which is Theorem \ref{thm:lwp4CSD}. 
For simplicity, we set $\kpp = 1$ in \eqref{eq:CSD:original}. Writing out $F_{\mu \nu}$, $\covD_{\mu}$ in terms of $A_{\mu}$ and using the $\bt,\, \alp^{\mu}$ matrices, we can reformulate the \eqref{eq:CSD:original} system as follows.
\begin{equation} \label{eq:CSD}
\left\{
\begin{aligned}
	& \rd_{\mu} A_{\nu} - \rd_{\nu} A_{\mu} = - 2 \eps_{\mu \nu \lmb} \psi^{\dagger} \alp^{\lmb} \psi, \\
	& i \rd_{0} \psi + i \alp^{j} \rd_{j} \psi = m \bt \psi - \alp^{\mu} A_{\mu} \psi.
\end{aligned}
\right.
\end{equation}
Let us furthermore impose the Lorenz gauge condition 
\begin{equation} \label{eq:Lorenz}
	\rd^{\mu} A_{\mu} = 0.
\end{equation}
Taking $\rd^{\mu}$ of the first equation in \eqref{eq:CSD}, we arrive at the following system of coupled wave and Dirac equations which describes the time evolution of the variables $(A_{\mu}, \psi)$.
\begin{equation} \label{eq:CSD:wave}
\left\{
\begin{aligned}
	& \Box A_{\nu} = \rd^{\mu} \mathfrak{N}_{\mu \nu}(\psi, \psi), \\
	& i \rd_{0} \psi + i \alp^{j} \rd_{j} \psi = m \bt \psi + \mathfrak{M}(\psi, A),
\end{aligned}
\right.
\end{equation}
where
\begin{equation*}
	\mathfrak{N}_{\mu \nu}(\psi_{1}, \psi_{2}) := - 2 \eps_{\mu \nu \lmb} (\psi_{1}^{\dagger} \alp^{\lmb} \psi_{2}), \qquad 
	\mathfrak{M}(\psi, A) := - \alp^{\mu} A_{\mu} \psi.
\end{equation*}

The initial data for the system \eqref{eq:CSD:wave} at $t=0$ is given in terms of $(a_{0}, a_{1}, a_{2}, \psi_{0})$ as follows.
\begin{equation} \label{eq:CSD:wave:id}
\left\{
\begin{aligned}
	&A_{\mu}(0, x) = a_{\mu}(x), \quad \psi(0, x) = \psi_{0}(x), \\
	&\rd_{t} A_{0}(0,x) = - \rd^{\ell} a_{\ell}(x), \quad \rd_{t} A_{j}(0, x) = \rd_{j} a_{0}(x) - 2 \eps_{0 j k} \psi_{0}^{\dagger} \alp^{k} \psi_{0}.
\end{aligned}
\right.
\end{equation}
Note that \eqref{eq:CSD:wave:id} is satisfied for an initial data set for the original \eqref{eq:CSD:original} system in the Lorenz gauge. Furthermore, in the converse direction, it can be easily verified that a sufficiently smooth\footnote{We remark that this statement can be extended to $C_{t} H^{s}_{x}$ solutions considered in Theorem \ref{thm:lwp4CSD} by continuous dependence on the initial data, which follows from the Picard iteration scheme we set up below.} solution $(A_{\nu}, \psi)$ to \eqref{eq:CSD:wave}, where $(a_{\mu}, \psi_{0})$ satisfies the constraint equation \eqref{eq:CSD:constraint} and the initial data are given by \eqref{eq:CSD:wave:id}, is a solution to the \eqref{eq:CSD:original} system in the Lorenz gauge. Therefore, in order to prove local well-posedness of \eqref{eq:CSD:original} under the Lorenz gauge condition, it suffices to study the system \eqref{eq:CSD:wave}.

\subsection{Set-up} \label{subsec:CSD:setup}
In this subsection, we will set up a Picard iteration in the space of type $X^{s,b}_{\pm}(S_{T})$ to prove Theorem \ref{thm:lwp4CSD}. 

We shall begin by reformulating \eqref{eq:CSD:wave} in an integral form. First, projecting the equation for $\psi$ using $\Pi_{\pm}$, we obtain
\begin{equation*} 
- (-i \rd_{0} \pm \abs{\nb}) \psi_{\pm} = m \bt \psi_{\mp} + \Pi_{\pm} \mathfrak{M}(\psi, A).
\end{equation*}
Then it is clear, by applying Lemma \ref{lem:mDuhamel} to the equation for $A_{\nu}$ and Duhamel's principle to the preceding equation, that \eqref{eq:CSD:wave} is equivalent to the following system of integral equations (for both choices of sign):
\begin{equation} \label{eq:CSD:wave:Duhamel}
\left\{
\begin{aligned}
	& A_{\nu, \pm}(t, x) = \Ahom_{\nu, \pm}(t,x) 
				- \frac{1}{2} \int_{0}^{t} e^{\pm (-i) (t-s) \abs{\nb}} \mR^{\mu}_{\pm} \mathfrak{N}_{\mu \nu}(\psi, \psi)  (s,x)\, \ud s, \\
	& \psi_{\pm}(t,x) = \psihom_{\pm}(t,x) 
				- i \int_{0}^{t} e^{\pm (-i) (t-s) \abs{\nb}} (m \bt \psi_{\mp} + \Pi_{\pm} \mathfrak{M}(\psi, A)) (s,x) \, \ud s,
\end{aligned}
\right.
\end{equation}
where $A_{\nu} = \sum_{\pm} A_{\nu, \pm}$, $\psi = \sum_{\pm} \psi_{\pm}$. The homogeneous parts $\Ahom_{\nu, \pm},\, \psihom_{\pm}$ are given by 
\begin{align*}
	& \Ahom_{\nu, \pm}  (t,x) = \frac{1}{2} e^{\pm(-i) t \abs{\nb}} \bb( A_{\nu}(0, x) \pm \frac{1}{i \abs{\nb}} (- \eps_{0 \nu \lmb} \psi^{\dagger} \alp^{\lmb} \psi - \rd_{0} A_{\nu} )(0, x) \bb), \\
	& \psihom_{\pm} (t,x) = e^{\pm (-i) t \abs{\nb}} \psi(0,x).
\end{align*}
Considering \eqref{eq:CSD:wave:id}, these are given in terms of $(a_{0}, a_{1}, a_{2}, \psi_{0})$ by
\begin{equation} \label{eq:CSD:wave:Duhamel:id}
\left\{
\begin{aligned}
	& \Ahom_{0, \pm} (t,x) = \frac{1}{2} e^{\pm(-i) t \abs{\nb}} \bb( a_{0} \pm \frac{\rd^{\ell} }{i \abs{\nb}} a_{\ell} \bb)(x), \\
	& \Ahom_{j, \pm}  (t,x) = \frac{1}{2} e^{\pm(-i) t \abs{\nb}} \bb( a_{j} \mp \frac{\rd_{j}}{i \abs{\nb}} a_{0}  \bb)(x), \\
	& \psihom_{\pm} (t,x) = e^{\pm (-i) t \abs{\nb}} \psi_{0}(x).
\end{aligned}
\right.
\end{equation}

We are now ready to set up our Picard iteration. Set $A^{(0)}_{\pm} := \Ahom_{\pm}$, $\psi^{(0)}_{\pm} := \psihom_{\pm}$, and for $n \geq 1$ define $A^{(n)}_{\pm}, \psi^{(n)}_{\pm}$ as 
\begin{equation*}
\left\{
\begin{aligned}
	& A^{(n)}_{\nu, \pm}(t, x) = \Ahom_{\nu, \pm}(t,x) 
				- \frac{1}{2} \int_{0}^{t} e^{\pm (-i) (t-s) \abs{\nb}} \mR^{\mu}_{\pm} \mathfrak{N}_{\mu \nu}(\psi^{(n-1)}, \psi^{(n-1)})  (s,x)\, \ud s, \\
	& \psi^{(n)}_{\pm}(t,x) = \psihom_{\pm}(t,x) 
				- i \int_{0}^{t} e^{\pm (-i) (t-s) \abs{\nb}} (m \bt \psi_{\mp} + \Pi_{\pm} \mathfrak{M}(\psi^{(n-1)}, A^{(n-1)})) (s,x) \, \ud s,
\end{aligned}
\right.
\end{equation*}
where $A_{\nu}^{(n)} = \sum_{\pm} A^{(n)}_{\nu, \pm}$, $\psi^{(n)} = \sum_{\pm} \psi^{(n)}_{\pm}$.
The aim is to show that $(A^{(n)}_{+}, A^{(n)}_{-}, \psi^{(n)}_{+}, \psi^{(n)}_{-})$ is a Cauchy sequence in the space $X^{s,b}_{+} \times X^{s,b}_{-} \times X^{s,b}_{+} \times X^{s,b}_{-}$.

As a first step, let us estimate the homogeneous parts (or the zeroth iterate) $\Ahom_{\mu, \pm}$ and $\psihom_{\pm}$. Since the Riesz transform $\rd_{j} / i \abs{\nb}$ is obvious bounded on $H^{s}_{x}$,  it follows from Lemma \ref{lem:XsbHom} and \eqref{eq:CSD:wave:Duhamel:id} that for any $s \in \bbR$ and $b > 1/2$,
\begin{equation*}
	\nrm{\Ahom_{\nu, \pm}}_{X^{s,b}_{\pm}(S_{T})} \aleq \sum_{\mu} \nrm{a_{\mu}}_{H^{s}_{x}}, \quad 
	\nrm{\psihom_{\pm}}_{X^{s,b}_{\pm}(S_{T})} \aleq \nrm{\psi_{0}}_{H^{s}_{x}} ,
\end{equation*}
where we note that the implicit constants do \emph{not} depend on $T > 0$. 

Next, in order to prove that the above Picard iteration converges, by standard arguments, it suffices to establish the following estimates, for some $\eps_{0} > 0$ and arbitrary spinor fields $\psi_{1}, \,\psi_{2},\, \psi$ and 1-form $A_{\nu}$, all of which are Schwartz:
\begin{align} 
	& \nrm{\mR_{\pm_{0}}^{\nu} \mathfrak{N}_{\mu \nu}(\psi_{1}, \psi_{2})}_{X^{s,b-1+\eps_{0}}_{\pm_{0}}} 
	\aleq \sum_{\pm_{1}, \pm_{2}} \nrm{\psi_{1, \pm_{1}}}_{X^{s,b}_{\pm_{1}}} \nrm{\psi_{2, \pm_{2}}}_{X^{s,b}_{\pm_{2}}}, \label{eq:CSD:est4N} \\
	& 	\nrm{m \bt \psi_{\mp}}_{X^{s,b-1+\eps_{0}}_{\pm_{0}}} 
	\aleq m \nrm{\psi_{\mp}}_{X^{s,b}_{\mp}}, \label{eq:CSD:est4M:0} \\
	&\nrm{\Pi_{\pm_{0}} \mathfrak{M}(\psi, A_{\nu})}_{X^{s,b-1+\eps_{0}}_{\pm_{0}}} 
	\aleq  \sum_{\pm_{1}, \pm_{2}} \nrm{\psi_{\pm_{1}}}_{X^{s,b}_{\pm_{1}}} \nrm{\bfA_{\pm_{2}}}_{X^{s,b}_{\pm_{2}}}, \label{eq:CSD:est4M:1}
\end{align}
where $\nrm{\bfA_{\pm}}_{X^{s,b}_{\pm}}$ is a shorthand for $\sum_{\nu=0,1,2} \nrm{A_{\nu, \pm}}_{X^{s,b}_{\pm}}$.

Note that the above global estimates actually immediately imply the corresponding estimates for the restriction spaces $X^{s,b}_{\pm}(S_{T})$; see the remark at the end of \S \ref{subsec:prelim:Xsb}. Moreover, thanks to the presence of $\eps_{0} > 0$, we gain a factor of $\abs{T}$ when applying Lemma \ref{lem:XsbInhom} to estimate the $X^{s,b}_{\pm}$ norm of the inhomogeneous parts, which can be used to obtain the necessary smallness to make the Picard iterates $(A^{(n)}_{+}, A^{(n)}_{-}, \psi^{(n)}_{+}, \psi^{(n)}_{-})$ form a Cauchy sequence. Finally, note that Lemma \ref{lem:Xsb2Hs} ensures that the solution that we obtain belongs to $C_{t} ((-T, T), H^{s}_{x})$, as desired.
As the arguments from this point to $H^{s}$-local well-posedness (Theorem \ref{thm:lwp4CSD}) are quite standard, we will omit the details. Henceforth, our focus will be to establish the global estimates \eqref{eq:CSD:est4N}--\eqref{eq:CSD:est4M:1}.

\subsection{Null structure of the Chern-Simons-Dirac in the Lorenz gauge} \label{subsec:CSD:nullform}
In this subsection, we will reveal the null structure of the quadratic nonlinearities of \eqref{eq:CSD:original} in the Lorenz gauge.

\subsubsection*{Wave equation for $A_{\mu}$}
Applying the commutator identity \eqref{eq:commAlpPi}, $\mathfrak{N}_{\mu \nu}$ may be written as
\begin{equation*}
\mathfrak{N}_{\mu \nu}(\psi_{1}, \psi_{2})
= \mathfrak{N}_{\mu \nu, 1}(\psi_{1}, \psi_{2}) + \mathfrak{N}_{\mu \nu, 2} (\psi_{1}, \psi_{2}), 
\end{equation*}
where
\begin{align*}
& \mathfrak{N}_{\mu \nu, 1}(\psi_{1}, \psi_{2}) := - 2 \sum_{\pm_{1}, \pm_{2}} \eps_{\mu \nu \lmb} (\psi_{1, \pm_{1}}^{\dagger} \Pi_{\mp_{2}} (\alp^{\lmb} \psi_{2, \pm_{2}})), \\
& \mathfrak{N}_{\mu \nu, 2}(\psi_{1}, \psi_{2}) := 2 \sum_{\pm_{1}, \pm_{2}} \eps_{\mu \nu \lmb} (\psi_{1, \pm_{1}}^{\dagger} \mR^{\lmb}_{\pm_{2}} \psi_{2, \pm_{2}}).
\end{align*}

For the term $\mathfrak{N}_{\mu \nu, 1}$, we will prove
\begin{equation} \label{eq:CSD:nf4N1}
	\nrm{\eps_{\mu \nu \lmb} \mR_{\pm_{0}}^{\nu} (\psi_{1, \pm_{1}}^{\dagger} \Pi_{\mp_{2}} (\alp^{\lmb} \psi_{2, \pm_{2}})) }_{X^{s,b-1+\eps_{0}}_{\pm_{0}}} 
	\aleq \nrm{\psi_{1, \pm_{1}}}_{X^{s,b}_{\pm_{1}}} \nrm{\psi_{2, \pm_{2}}}_{X^{s,b}_{\pm_{2}}}
\end{equation}
for arbitrary signs $(\pm_{0}, \,\pm_{1},\, \pm_{2})$ and spinor fields $\psi_{1, \pm_{1}},\, \psi_{2, \pm_{2}}$, both of which are Schwartz.
Note that Part (3) of Lemma \ref{lem:nullform} applies to this bilinear form. 

On the other hand, the null form of the second term $\mathfrak{N}_{\mu \nu,2}$ is most easily seen after a duality argument. For this term, we will prove
\begin{equation} \label{eq:CSD:nf4N2}	
	\bb\vert \int \eps_{\mu \nu \lmb} (\psi_{1, \pm_{1}}^{\dagger} \mR^{\lmb}_{\pm_{2}} \psi_{2, \pm_{2}}) \overline{\mR_{\pm_{0}}^{\nu} \phi_{\pm_{0}}} \, \ud t \ud x\bb\vert
	\aleq \nrm{\psi_{1, \pm_{1}}}_{X^{s,b}_{\pm_{1}}} \nrm{\psi_{2, \pm_{2}}}_{X^{s,b}_{\pm_{2}}} \nrm{\phi_{\pm_{0}}}_{X^{-s,1-b-\eps_{0}}_{\pm_{0}}}
\end{equation}
for every combination $(\pm_{0}, \,\pm_{1},\, \pm_{2})$ of signs, Schwartz spinor fields $\psi_{1, \pm_{1}},\, \psi_{2, \pm_{2}}$ and $\phi_{\pm_{0}} \in \calS(\bbR^{1+2})$. Note that $\mR_{\pm_{0}}^{\nu}$ has been moved to the factor $\phi_{\pm_{0}}$ using self-adjointness. We remark that the left-hand side of \eqref{eq:CSD:nf4N2} possesses a $\nf^{\lmb \nu}$-type null form between $\psi_{2,\pm_{2}}$ and $\phi_{\pm_{0}}$. 

\subsubsection*{Dirac equation for $\psi$}
Let us focus on the bilinear estimate \eqref{eq:CSD:est4M:1}. Using the commutator formula \eqref{eq:commAlpPi}, we have $- \Pi_{\pm}( A_{\mu} \alp^{\mu} \psi) = \mathfrak{M}_{\pm,1}(\psi, A) +\mathfrak{M}_{\pm,2}(\psi, A)$, where
\begin{align*}
&\mathfrak{M}_{\pm_{0},1}(\psi, A) := - \sum_{\pm} \Pi_{\pm_{0}} (A_{\mu} \Pi_{\mp} \alp^{\mu} \psi_{\pm}) , \\
& \mathfrak{M}_{\pm_{0},2}(\psi, A) :=  \sum_{\pm}  \Pi_{\pm_{0}} ( A_{\mu} \, \mR^{\mu}_{\pm} \psi_{\pm} ).
\end{align*}

The null structure of the first term $\mathfrak{M}_{\pm,1}$ may be exhibited via a duality argument, as in \cite{DAncona:2007ex}. For this term, we will establish
\begin{equation} \label{eq:CSD:nf4M1}
	\bb\vert \int A_{\mu, \pm_{2}} (\psi^{\dagger}_{0, \pm_{0}} \Pi_{\mp_{1}} \alp^{\mu} \psi_{1,\pm_{1}})  \, \ud t \ud x \bb\vert \aleq \nrm{\psi_{0, \pm_{0}}}_{X^{-s,1-b-\eps_{0}}_{\pm_{0}}} \nrm{\psi_{1, \pm_{1}}}_{X^{s,b}_{\pm_{1}}} \nrm{\bfA_{\pm_{2}}}_{X^{s,b}_{\pm_{2}}} 
\end{equation}
for arbitrary signs $(\pm_{0},\, \pm_{1},\, \pm_{2})$, spinor fields $\psi_{0, \pm_{0}},\, \psi_{1, \pm_{1}}$ and 1-form $A_{\mu, \pm_{2}}$, all of which are Schwartz. Note that the bilinear form involving $\psi_{0,\pm_{0}}$ and $\psi_{1, \pm_{1}}$ is a null form according to Part (3) of Lemma \ref{lem:nullform}.

%

Finally, in order to reveal the null structure of the second term $\mathfrak{M}_{\pm_{0},2}$, we proceed as in \cite{Selberg:2010ig}. The first step is to divide $A_{j} = A^{\df}_{j} + A^{\cf}_{j}$ according to the Hodge decomposition. To describe this decomposition, let us consider a 1-form $A$ which is Schwartz in space, and recall the definitions $\curl A := \rd_{1} A_{2} - \rd_{2} A_{1}$ and $\div A := - \rd_{1} A_{1} - \rd_{2} A_{2}$. Then the \emph{Hodge decomposition theorem} states that $A_{i}$ may be (uniquely) written as a sum of \emph{divergence-} and \emph{curl-free} parts $A^{\df}_{i} + A^{\cf}_{i}$ which vanish sufficiently fast at the spatial infinity. The latter two 1-forms are given by the formulae
\begin{align*}
	A^{\df}_{1} \ud x^{1} + A^{\df}_{2} \ud x^{2} 
	:= & (-\lap)^{-1} (\rd_{2} (\curl A) \, \ud x^{1} - \rd_{1} (\curl A) \, \ud x^{2}), \\
	A^{\cf}_{1} \ud x^{1} + A^{\cf}_{2} \ud x^{2} 
	:= & (-\lap)^{-1} (\rd_{1} (\div A) \, \ud x^{1} + \rd_{2} (\div A) \, \ud x^{2}).
\end{align*}
The above statements can be readily verified using the Fourier transform.
It is then a well-known fact that $A^{\df}_{\ell} \, \mR^{\ell}_{\pm} \psi_{\pm}$ possesses a null structure. Indeed, we have the component-wise formula
\begin{equation*}
	\sum_{\pm_{1}} A^{\df}_{\ell} \, \mR^{\ell}_{\pm_{1}} \psi_{\pm_{1}} 
	= \sum_{\pm_{1}, \pm_{2}} \nf^{12}_{\pm_{2}, \pm_{1}}(B_{\pm_{2}}, \psi_{\pm_{1}}),
\end{equation*}
where
\begin{equation*}
	B_{\pm} := \mR_{\pm,1} A_{2,\pm} - \mR_{\pm,2} A_{1,\pm}.
\end{equation*}


On the other hand, using the fact that $\rd^{\ell} A_{\ell} = - \rd_{0} A_{0} = i (\abs{\nb} A_{0,+} - \abs{\nb} A_{0,-})$, we have
\begin{align*}
& A_{0} = - (\mR_{+, 0} A_{0,+} + \mR_{-, 0} A_{0, -} ), \\
& A^{\cf}_{j} = - ( \mR_{+, j} A_{0,+} + \mR_{-, j}A_{0, -}).
\end{align*}
Therefore, we obtain a component-wise formula
\begin{equation*}
	\sum_{\pm_{1}} \bb( A_{0}  \, \mR^{0}_{\pm_{1}} \psi_{\pm_{1}} + A^{\cf}_{\ell} \, \mR^{\ell}_{\pm_{1}} \psi_{\pm_{1}} \bb)
	= \sum_{\pm_{1}, \pm_{2}} \nf^{0}_{\pm_{1}, \pm_{2}} (\psi_{\pm_{1}}, A_{0, \pm_{2}}).
\end{equation*}
In sum, we will establish 
\begin{align} 
	\nrm{\Pi_{\pm_{0}}(\nf^{12}_{\pm_{2}, \pm_{1}}(B_{\pm_{2}}, \psi_{\pm_{1}}))}_{X^{s, b-1+\eps_{0}}_{\pm_{0}}} 
	\aleq &  \nrm{\psi_{\pm_{1}}}_{X^{s,b}_{\pm_{1}}} \nrm{B_{\pm_{2}}}_{X^{s,b}_{\pm_{2}}} , \label{eq:CSD:nf4M2:1} \\
	\nrm{\Pi_{\pm_{0}} (\nf^{0}_{\pm_{1}, \pm_{2}} (\psi_{\pm_{1}}, A_{0, \pm_{2}}))}_{X^{s, b-1+\eps_{0}}_{\pm_{0}}} 
	\aleq &  \nrm{\psi_{\pm_{1}}}_{X^{s,b}_{\pm_{1}}} \nrm{A_{0, \pm_{2}}}_{X^{s,b}_{\pm_{2}}} \label{eq:CSD:nf4M2:2}
\end{align}
for all combinations of signs $(\pm_{0}, \pm_{1}, \pm_{2})$ and Schwartz inputs.

\subsection{Proof of the global estimates} \label{subsec:CSD:inhom}
Based on the null structure revealed in the previous subsection, we will find sufficient conditions on $(s, b)$ to rigorously establish \eqref{eq:CSD:est4N}--\eqref{eq:CSD:est4M:1}.

\subsubsection*{Linear estimate}
Let us begin by proving \eqref{eq:CSD:est4M:0}, which is the (linear) mass term for the Dirac equation. Provided that
\begin{equation} \label{eq:CSD:cond4sb:1}
	0 < b < 1,
\end{equation}
and $\eps_{0}$ is small enough, we simply compute
\begin{align*}
	\nrm{\Pi_{\pm_{0}} (m \bt \psi_{\mp})}_{X^{s,b-1+\eps_{0}}_{\pm_{0}}} 
	\leq m \nrm{\psi_{\mp}}_{X^{s, b-1+\eps_{0}}_{\pm_{0}}} 
	\leq m \nrm{\brk{\nb}^{s} \psi_{\mp}}_{L^{2}_{t,x}} 
	\leq m \nrm{\psi_{\mp}}_{X^{s,b}_{\mp}}. 
\end{align*}

\subsubsection*{Bilinear estimates}
Next, we will prove \eqref{eq:CSD:est4N} and \eqref{eq:CSD:est4M:1}, which have been reduced to proving \eqref{eq:CSD:nf4N1}, \eqref{eq:CSD:nf4N2}, \eqref{eq:CSD:nf4M1}, \eqref{eq:CSD:nf4M2:1} and \eqref{eq:CSD:nf4M2:2} for every combination of signs $(\pm_{0},\pm_{1},\pm_{2})$ and Schwartz inputs. Applying Lemma \ref{lem:nullform}, we see that all of these estimates would follow if we establish
\begin{align}
	& \nrm{\mathfrak{B}^{1}_{\pm_{1}, \pm_{2}}(\phi_{1}, \phi_{2}) }_{X^{s, b-1+\eps_{0}}_{\pm_{0}}}
	\aleq \nrm{\phi_{1}}_{X^{s,b}_{\pm_{1}}} \nrm{\phi_{2}}_{X^{s,b}_{\pm_{2}}},  \label{eq:nfEst:1:0} \\
	& \nrm{\mathfrak{B}^{1}_{\pm_{1}, \pm_{2}}(\phi_{1}, \phi_{2}) }_{X^{-s, -b}_{\pm_{0}}}
	\aleq \nrm{\phi_{1}}_{X^{s,b}_{\pm_{1}}} \nrm{\phi_{2}}_{X^{-s,1-b-\eps_{0}}_{\pm_{2}}} \label{eq:nfEst:2:0}
\end{align}
for every combination of signs $(\pm_{0},\pm_{1},\pm_{2})$ and Schwartz functions $\phi_{1},\, \phi_{2}$. 
Note that $X^{s,b}_{\pm} \subset H^{s,b} \subset H^{s,-b} \subset X^{s,-b}_{\pm}$ for $s \in \bbR$ and $b \geq 0$. Therefore, \eqref{eq:nfEst:1:0} and \eqref{eq:nfEst:1:1} would follow from the analogous estimates in which $X^{s,b}_{\pm}$ are replaced by $H^{s,b}$, provided that \eqref{eq:CSD:cond4sb:1} holds and $\eps_{0}$ is sufficiently small.

In order to quantify the effect of the angular cancellation present in the abstract null form $\mathfrak{B}^{1}_{\pm_{1}, \pm_{2}}$, we will borrow the following lemma from \cite{Selberg:2012vb}.
\begin{lemma} \label{lem:est4angle}
Consider arbitrary signs $(\pm_{1}, \pm_{2})$, $\tau_{1}, \tau_{2} \in \bbR$, and $\xi_{1}, \xi_{2} \in \bbR^{2}$. For $\tau_{0} := \tau_{1} + \tau_{2}$ and $\xi_{0} := \xi_{1} + \xi_{2}$, the following inequality holds.
\begin{equation} \label{}
	\angle (\pm_{1} \xi_{1}, \pm_{2} \xi_{2}) 
	\aleq \bb( \frac{\brk{\abs{\tau_{0}} - \abs{\xi_{0}}} + \brk{\tau_{1} \pm_{1} \abs{\xi_{1}}} + \brk{\tau_{2} \pm_{2} \abs{\xi_{2}}}}
				{\min\set{\brk{\xi_{1}}, \brk{\xi_{2}}}} \bb)^{1/2}.
\end{equation}
\end{lemma}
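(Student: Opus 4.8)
The plan is to derive the estimate from the law of cosines together with elementary bookkeeping of the hyperbolic weights. Write $\theta := \angle(\pm_1 \xi_1, \pm_2 \xi_2) \in [0, \pi]$ and abbreviate the three modulations by $h_0 := \brk{\abs{\tau_0} - \abs{\xi_0}}$, $h_1 := \brk{\tau_1 \pm_1 \abs{\xi_1}}$, $h_2 := \brk{\tau_2 \pm_2 \abs{\xi_2}}$, so the claim reads $\theta \aleq \big((h_0 + h_1 + h_2)/\min(\brk{\xi_1}, \brk{\xi_2})\big)^{1/2}$. First I would dispose of the regime $\min(\abs{\xi_1}, \abs{\xi_2}) \leq 1$: there $\min(\brk{\xi_1}, \brk{\xi_2}) \aeq 1$ while the numerator is $\ageq 1$, so the right-hand side is $\ageq 1 \ageq \theta / \pi$ and there is nothing to prove. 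In the complementary regime $\min(\abs{\xi_1}, \abs{\xi_2}) > 1$ one has $\min(\abs{\xi_1}, \abs{\xi_2}) \aeq \min(\brk{\xi_1}, \brk{\xi_2})$, so it suffices to prove $\theta^2 \aleq (h_0 + h_1 + h_2) / \min(\abs{\xi_1}, \abs{\xi_2})$.

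Next I would record the algebraic identity, immediate from $\xi_0 = \xi_1 + \xi_2$ and $(\pm_1 \xi_1) \cdot (\pm_2 \xi_2) = (\pm_1 \pm_2) \, \xi_1 \cdot \xi_2$,
\begin{equation*}
	2 \abs{\xi_1} \abs{\xi_2} (1 - \cos \theta) = (\pm_1 \pm_2) \bb( (\pm_1 \abs{\xi_1} + \pm_2 \abs{\xi_2})^2 - \abs{\xi_0}^2 \bb).
\end{equation*}
Since the left-hand side is nonnegative, so is the right-hand side, and it factors as $(\pm_1 \pm_2) \bb( \abs{\pm_1 \abs{\xi_1} + \pm_2 \abs{\xi_2}} - \abs{\xi_0} \bb) \bb( \abs{\pm_1 \abs{\xi_1} + \pm_2 \abs{\xi_2}} + \abs{\xi_0} \bb)$. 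The last factor is $\aleq \abs{\xi_1} + \abs{\xi_2} \aleq \max(\abs{\xi_1}, \abs{\xi_2})$, whereas $2 \abs{\xi_1} \abs{\xi_2} \ageq \min(\abs{\xi_1}, \abs{\xi_2}) \max(\abs{\xi_1}, \abs{\xi_2})$; combining these with the elementary bound $1 - \cos \theta \ageq \theta^2$ on $[0,\pi]$ (Jordan's inequality gives $\sin(\theta/2) \ageq \theta$) yields $\theta^2 \aleq D / \min(\abs{\xi_1}, \abs{\xi_2})$, where $D := \abs{ \abs{\pm_1 \abs{\xi_1} + \pm_2 \abs{\xi_2}} - \abs{\xi_0} } \geq 0$.

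It then remains to show $D \aleq h_0 + h_1 + h_2$, which is where the hyperbolic weights enter. Using $\pm_j \abs{\xi_j} = (\tau_j \pm_j \abs{\xi_j}) - \tau_j$ together with $\tau_0 = \tau_1 + \tau_2$, I would write $\pm_1 \abs{\xi_1} + \pm_2 \abs{\xi_2} = \sigma - \tau_0$ with $\abs{\sigma} \leq h_1 + h_2$, and then apply the (reverse) triangle inequality twice: $D \leq \abs{ \abs{\sigma - \tau_0} - \abs{\tau_0} } + \abs{ \abs{\tau_0} - \abs{\xi_0} } \leq \abs{\sigma} + h_0 \leq h_0 + h_1 + h_2$. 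Together with the previous paragraph this gives the lemma. I do not expect a genuine obstacle — this is a standard geometric fact (it is quoted from \cite{Selberg:2012vb}) — the only points requiring a little care are the passage from $\min(\abs{\xi_j})$ to $\min(\brk{\xi_j})$ in the denominator (handled by the low-frequency case split at the outset) and tracking the sign factor $\pm_1 \pm_2$ through the law of cosines, which is exactly why it is convenient to package everything into the single nonnegative scalar $D$.
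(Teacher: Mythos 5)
Your proof is correct. The paper itself gives no argument for this lemma beyond the citation ``This is equivalent to [Selberg--Tesfahun, Lemma 3.2]'', so you have supplied the self-contained derivation that the paper defers to the literature; your route (law of cosines in the form $2\abs{\xi_1}\abs{\xi_2}(1-\cos\theta) = (\pm_1\pm_2)\big((\pm_1\abs{\xi_1}+\pm_2\abs{\xi_2})^2 - \abs{\xi_0}^2\big)$, factoring, bounding the large factor by $\max(\abs{\xi_1},\abs{\xi_2})$, and then controlling $D$ by the three modulations via the reverse triangle inequality) is exactly the standard argument behind the cited result. The bookkeeping is all sound: the low-frequency case split legitimately reduces $\brk{\xi_j}$ to $\abs{\xi_j}$, the sign factor is correctly absorbed into the nonnegative quantity $D$ using the nonnegativity of $1-\cos\theta$ and of the sum factor, and $1-\cos\theta = 2\sin^2(\theta/2) \gtrsim \theta^2$ on $[0,\pi]$ closes the geometric side. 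The only (harmless) degenerate cases are $\xi_1 = 0$ or $\xi_2 = 0$, where the angle is undefined and which are swallowed by your first case anyway.
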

\begin{proof} 
This is equivalent to \cite[Lemma 3.2]{Selberg:2012vb}.
\end{proof}

In view of Lemma \ref{lem:est4angle}, to prove \eqref{eq:nfEst:1:0}, it suffices to establish
\begin{equation} \label{eq:nfEst:1:1}
\left\{
\begin{aligned}
	& \nrm{\phi_{1} \phi_{2}}_{H^{s, b-1/2+\eps_{0}}} \aleq \nrm{\phi_{1}}_{H^{s+1/2, b}} \nrm{\phi_{2}}_{H^{s, b}}, \\
	& \nrm{\phi_{1} \phi_{2}}_{H^{s, b-1+\eps_{0}}} \aleq \nrm{\phi_{1}}_{H^{s+1/2, b-1/2}} \nrm{\phi_{2}}_{H^{s, b}}, \\
	& \nrm{\phi_{1} \phi_{2}}_{H^{s, b-1+\eps_{0}}} \aleq \nrm{\phi_{1}}_{H^{s+1/2, b}} \nrm{\phi_{2}}_{H^{s, b-1/2}},
\end{aligned}
\right.
\end{equation}
whereas for \eqref{eq:nfEst:2:0}, it is enough to prove
\begin{equation} \label{eq:nfEst:2:1}
\left\{
\begin{aligned}
	& \nrm{\phi_{1} \phi_{2}}_{H^{-s, -b+1/2}} \aleq \nrm{\phi_{1}}_{H^{s+1/2, b}} \nrm{\phi_{2}}_{H^{-s, 1-b-\eps_{0}}}, \\
	& \nrm{\phi_{1} \phi_{2}}_{H^{-s, -b+1/2}} \aleq \nrm{\phi_{1}}_{H^{s, b}} \nrm{\phi_{2}}_{H^{-s+1/2, 1-b-\eps_{0}}}, \\
	& \nrm{\phi_{1} \phi_{2}}_{H^{-s, -b}} \aleq \nrm{\phi_{1}}_{H^{s+1/2, b-1/2}} \nrm{\phi_{2}}_{H^{-s, 1-b-\eps_{0}}}, \\
	& \nrm{\phi_{1} \phi_{2}}_{H^{-s, -b}} \aleq \nrm{\phi_{1}}_{H^{s, b-1/2}} \nrm{\phi_{2}}_{H^{-s+1/2, 1-b-\eps_{0}}}, \\
	& \nrm{\phi_{1} \phi_{2}}_{H^{-s, -b}} \aleq \nrm{\phi_{1}}_{H^{s+1/2, b}} \nrm{\phi_{2}}_{H^{-s, 1/2-b-\eps_{0}}}, \\
	& \nrm{\phi_{1} \phi_{2}}_{H^{-s, -b}} \aleq \nrm{\phi_{1}}_{H^{s, b}} \nrm{\phi_{2}}_{H^{-s+1/2, 1/2-b-\eps_{0}}}.
\end{aligned}
\right.
\end{equation}
By duality, \eqref{eq:nfEst:1:1} is subsumed to \eqref{eq:nfEst:2:1}; therefore, we are reduced to proving \eqref{eq:nfEst:2:1}.

Quite conveniently, the necessary and sufficient conditions\footnote{There are still some endpoint cases missing; see \cite{DAncona:2010vz} for more detail.} for such product estimates to hold are already available in the work of d'Ancona, Foschi and Selberg \cite{DAncona:2010vz}. We summarize the result that we need in the following theorem.

\begin{theorem}[d'Ancona, Foschi and Selberg \cite{DAncona:2010vz}] \label{thm:HsbProd}
Let $s_{0}, s_{1}, s_{2}, b_{0}, b_{1}, b_{2} \in \bbR$, and $\phi_{1}, \phi_{2}$ Schwartz functions on $\bbR^{1+2}$. The product estimate
\begin{equation*}
	\nrm{\phi_{1} \phi_{2}}_{H^{-s_{0}, -b_{0}}} \leq C \nrm{\phi_{1}}_{H^{s_{1}, b_{1}}} \nrm{\phi_{2}}_{H^{s_{2}, b_{2}}}
\end{equation*}
holds for some $C > 0$ if the following conditions are satisfied.
\begin{align*}
& b_{0} + b_{1} + b_{2} > 1/2, \\
& b_{0} + b_{1} + b_{2} \geq \max \set{b_{0}, b_{1}, b_{2}}, \\
& s_{0} + s_{1} + s_{2} > 3/2 - (b_{0} + b_{1} + b_{2}), \\
& s_{0} + s_{1} + s_{2} > 1 - \min \set{b_{0}+b_{1}, b_{1}+b_{2}, b_{2}+b_{0}}, \\
& s_{0} + s_{1} + s_{2} > 1/2 - \min \set{b_{0}, b_{1}, b_{2}}, \\
& s_{0} + s_{1} + s_{2} > 3/4, \\
& (s_{0} + b_{0}) + 2s_{1} + 2s_{2} > 1, \\
& 2s_{0} + (s_{1} + b_{1}) + 2s_{2} > 1, \\
& 2s_{0} + 2s_{1} + (s_{2} + b_{2}) > 1, \\
& s_{0} + s_{1} + s_{2} \geq \max \set{s_{0}, s_{1}, s_{2}}, \\
& b_{0} + s_{1} + s_{2} \geq 0, \\
& s_{0} + b_{1} + s_{2} \geq 0, \\
& s_{0} + s_{1} + b_{2} \geq 0.
\end{align*}
\end{theorem}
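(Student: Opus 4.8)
Since this estimate is quoted from d'Ancona--Foschi--Selberg \cite{DAncona:2010vz}, in the paper one would simply cite it; here I outline how the proof goes. The plan is to first reduce, by Plancherel's theorem and duality (the dual of $H^{s,b}$ under the $L^{2}_{t,x}$ pairing being $H^{-s,-b}$), to bounding the trilinear form $\int_{\bbR^{1+2}} \phi_{1} \phi_{2} \overline{\phi_{0}} \, \ud t \, \ud x$ by $C \prod_{j=0}^{2} \nrm{\phi_{j}}_{H^{s_{j}, b_{j}}}$. Replacing every space-time Fourier transform by its absolute value (which only increases the left side) and setting $u_{j} := \brk{\xi_{j}}^{s_{j}} \brk{\abs{\tau_{j}}-\abs{\xi_{j}}}^{b_{j}} \abs{\widetilde{\phi_{j}}} \in L^{2}$, this becomes the weighted convolution bound
\begin{equation*}
	\int_{\tau_{0} = \tau_{1}+\tau_{2}, \, \xi_{0} = \xi_{1}+\xi_{2}} \frac{u_{0}(\tau_{0},\xi_{0}) \, u_{1}(\tau_{1},\xi_{1}) \, u_{2}(\tau_{2},\xi_{2})}{\brk{\xi_{0}}^{s_{0}} \brk{\xi_{1}}^{s_{1}} \brk{\xi_{2}}^{s_{2}} \, \prod_{j} \brk{\abs{\tau_{j}}-\abs{\xi_{j}}}^{b_{j}}} \aleq \nrm{u_{0}}_{L^{2}} \nrm{u_{1}}_{L^{2}} \nrm{u_{2}}_{L^{2}},
\end{equation*}
and splitting each $\phi_{j}$ according to the sign of $\tau_{j}$ further reduces $\brk{\abs{\tau_{j}}-\abs{\xi_{j}}}$ to an $X^{s,b}_{\pm}$-type weight $\brk{\tau_{j}\pm_{j}\abs{\xi_{j}}}$, handled one sign configuration $(\pm_{0},\pm_{1},\pm_{2})$ at a time.

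Next I would dyadically decompose each factor into pieces with $\abs{\xi_{j}} \aeq N_{j}$ and $\brk{\tau_{j}\pm_{j}\abs{\xi_{j}}} \aeq L_{j}$ (dyadic numbers $\geq 1$). On each such block the weights are essentially constant, so matters reduce to (i) bounding, for fixed $(N_{j}, L_{j})$, the corresponding \emph{unweighted} convolution integral, and (ii) summing the resulting bound times $\prod_{j} N_{j}^{-s_{j}} L_{j}^{-b_{j}}$ over all dyadic configurations, after partitioning frequency space into the interaction regimes (high-high $\to$ low, low-high $\to$ high, balanced) and the orderings of $L_{0},L_{1},L_{2}$. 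The unweighted block bound is a bilinear $L^{2}_{t,x}$ estimate for products of functions Fourier-supported in thickened light cones, and its proof rests on two classical facts in $2+1$ dimensions: the algebraic identity on the convolution set that forces $\max_{j}\brk{\tau_{j}\pm_{j}\abs{\xi_{j}}} \ageq N_{\min} \, \angle(\pm_{1}\xi_{1},\pm_{2}\xi_{2})^{2}$ once the resonance function is small (the quantitative relation behind Lemma \ref{lem:est4angle}), together with the $L^{2}$ bilinear estimates for modulation-localized free waves (the $L^{4}_{t,x}$ Strichartz estimate and its bilinear refinements with an angular/transversality gain, due to Foschi--Klainerman and Tao). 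Combining these and distributing the modulation gain among the three factors yields, in each regime, an explicit power bound $C \, N_{0}^{a_{0}} N_{1}^{a_{1}} N_{2}^{a_{2}} L_{0}^{c_{0}} L_{1}^{c_{1}} L_{2}^{c_{2}}$.

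The last step is then the convergence of the geometric series $\sum \prod_{j} N_{j}^{a_{j}-s_{j}} L_{j}^{c_{j}-b_{j}}$, and the long list of inequalities in the statement is exactly what makes these sums converge in every regime: $b_{0}+b_{1}+b_{2}>1/2$ and $b_{i}+b_{j}\geq 0$ (i.e.\ $b_{0}+b_{1}+b_{2}\geq\max_{j}b_{j}$) close the modulation sums with no help from spatial regularity; the scaling condition $s_{0}+s_{1}+s_{2}>3/2-(b_{0}+b_{1}+b_{2})$ together with $s_{0}+s_{1}+s_{2}>3/4$ handles the balanced high-frequency interaction; and each of the remaining conditions---$s_{0}+s_{1}+s_{2}>1-\min_{i\neq j}(b_{i}+b_{j})$, $s_{0}+s_{1}+s_{2}>1/2-\min_{j}b_{j}$, the three Knapp-type conditions $(s_{k}+b_{k})+2\sum_{j\neq k}s_{j}>1$, the triangle inequalities $s_{i}+s_{j}\geq 0$, and the positivity conditions $b_{k}+\sum_{j\neq k}s_{j}\geq 0$---rules out a logarithmic or polynomial divergence in one specific frequency/modulation regime (low output frequency, concentration at a single frequency, Knapp focusing on the cone, and so on), with sharpness witnessed by the corresponding extremizing test functions.

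The main obstacle will be the coherent resonant regime, where all three modulations are small and all three frequencies are comparable: there the only available smoothing is the curvature of the light cone, extracted through the angular weight governed by Lemma \ref{lem:est4angle}, and one must balance that gain against the modulation weights and check that the exponents so obtained leave exactly enough room for the dyadic sum to converge under precisely the stated hypotheses---which is why the list is as long and as tight as it is. For the complete argument, including the borderline cases mentioned in the footnote, I would refer to \cite{DAncona:2010vz}.
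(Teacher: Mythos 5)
The paper gives no proof of this theorem---it is quoted directly from \cite{DAncona:2010vz}---and your proposal likewise defers to that reference, so your treatment matches the paper's exactly. Your accompanying outline (duality to a trilinear convolution bound, dyadic decomposition in frequency and modulation, bilinear estimates for cone-supported pieces, and convergence of the dyadic sums under the listed conditions) is a fair sketch of how the cited proof actually proceeds, but it is not needed here.
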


Using Theorem \ref{thm:HsbProd}, it is not difficult to check that \eqref{eq:nfEst:2:1} holds provided that
\begin{equation} \label{eq:CSD:cond4sb:2}
	\frac{1}{2} < b < 1, \qquad s > \max \bb\{\frac{1}{4},\, \frac{b}{2} - \frac{1}{4},\, 1-b,\, \frac{b}{3} \bb\}
\end{equation}
and $\eps_{0} > 0$ is small enough. 

\subsubsection*{Conclusion}
Choosing $s = 1/4+\eps$ and $b = 3/4-2\eps$ for an arbitrary $\eps$ satisfying $0 < \eps \ll 1$, and choosing $\eps_{0}$ small enough, one may verify that all of the conditions \eqref{eq:CSD:cond4sb:1}, \eqref{eq:CSD:cond4sb:2} are satisfied. This proves Theorem \ref{thm:lwp4CSD}.

\section{Chern-Simons-Higgs equations: Proof of Theorem \ref{thm:lwp4CSH}}
In this final section, we apply our techniques to the Chern-Simons-Higgs equations \eqref{eq:CSH:original} in the Lorenz gauge, which leads to an improvement over the low regularity local well-posedness result in \cite{Selberg:2012vb}.
Before we begin, let us briefly indicate the new idea that is being added here. In \cite{Selberg:2012vb}, the bottleneck for the restriction $s > 3/8$ ($s$ denotes the regularity of $A_{\mu}$, whereas $\phi$ has the regularity $s+1/2$)  was the bilinear estimate\footnote{More precisely, the {\it high} $\times$ {\it high} $\to$ {\it low} interaction of $\eps_{\mu \nu \lmb}\Box^{-1} \Im(\rd^{\nu} \overline{\phi} \rd^{\lmb} \phi)$.} for $\eps_{\mu \nu \lmb}\Box^{-1} \Im(\rd^{\nu} \overline{\phi} \rd^{\lmb} \phi)$. The idea is to simply use Lemma \ref{lem:mDuhamel}, instead of using the product rule for $\rd^{\nu}$, to deal with the equation $\Box A_{\mu} = \eps_{\mu \nu \lmb} \rd^{\nu} 2 \Im(\overline{\phi} \covD^{\lmb} \phi)$. Then the unfavorable operator $\abs{\nb}^{-1}$ in the wave kernel is canceled (which had been the source of trouble), leading to the improvement $s > 1/4$.

\subsection{Set-up}
Consider the system \eqref{eq:CSH:original} under the Lorenz gauge condition $\rd^{\mu} A_{\mu} = 0$, along with an $H^{s}$ initial data set $(a_{\mu}, f, g)$. We will take $\kpp =1$ for simplicity. In order to prove Theorem \ref{thm:lwp4CSH}, we will set up a Picard iteration, using the system
\begin{equation} \label{eq:CSH:wave}
	\left\{
	\begin{aligned}
	& \Box A_{\nu} = 2 \eps_{\mu \nu \lmb} \rd^{\mu} \Im (\overline{\phi} \covD^{\lmb} \phi), \\
	& (\Box + 1) \phi= 2 i A^{\mu} \rd_{\mu} \phi + A^{\mu} A_{\mu} \phi + \phi,
	\end{aligned}
	\right.
\end{equation}
with the initial data
\begin{equation} \label{eq:CSH:wave:id}
\left\{
\begin{aligned}
	&A_{\mu}(0, x) = a_{\mu}(x), 
	\quad  \rd_{t} A_{0}(0,x) = - \rd^{\ell} a_{\ell}(x), \\
	& \rd_{t} A_{j}(0, x) = \rd_{j} a_{0}(x) + 2 \eps_{0 j k} \Im (\overline{f} (\rd^{k} - i a^{k}) f)(x), \\
	& \phi(0, x) = f(x), \quad
	 \rd_{t} \phi (0, x) = g(x).
\end{aligned}
\right.
\end{equation}
Indeed, \eqref{eq:CSH:wave} is easily derived from \eqref{eq:CSH:original} by taking $\rd^{\mu} F_{\mu \nu}$ and utilizing the Lorenz gauge condition, as in the case of \eqref{eq:CSD:original}. Moreover, a (sufficiently smooth) solution to \eqref{eq:CSH:wave}, where $(a_{\mu}, f,g)$ satisfies the constraint equation \eqref{eq:CSH:constraint} and the initial data are given by \eqref{eq:CSH:wave:id}, also solves the original \eqref{eq:CSH:original} system in the Lorenz gauge.
Thus, to prove local well-posedness of the \eqref{eq:CSH:original} under the Lorenz gauge condition, it suffices to study \eqref{eq:CSH:wave}.

Note that, as in \cite{Selberg:2012vb}, we have added $\phi$ to both sides of the wave equation for $\phi$, thereby introducing an artificial \emph{mass}. This is to avoid the operator $\abs{\nb}^{-1}$ in the half-wave decomposition (see \eqref{eq:hwDecomp4KG}, \eqref{eq:KG:hw}), which is cumbersome for low frequency. 

As before, we will apply Lemma \ref{lem:mDuhamel} to the equation for $A_{\nu}$. Moreover, applying \eqref{eq:KG:hw} and Duhamel's formula for the equation for $\phi$, we arrive at the following integral formulation of \eqref{eq:CSH:wave}.
\begin{equation*}
\begin{aligned}
	& A_{\nu, \pm}(t, x) = \Ahom_{\nu, \pm}(t,x) 
				- \int_{0}^{t} e^{\pm (-i) (t-s) \abs{\nb}} \mR^{\mu}_{\pm} (\eps_{\mu \nu \lmb} \Im (\overline{\phi} \covD^{\lmb} \phi))  (s,x)\, \ud s, \\
	& \phi_{\pm}(t,x) = \phihom_{\pm} \pm i \int_{0}^{t} \frac{e^{\pm (-i) (t-s) \brk{\nb}}}{2 i \brk{\nb}} (2 i A^{\mu} \rd_{\mu} \phi + A^{\mu} A_{\mu} \phi + \phi) (s, x) \, \ud s,
\end{aligned}
\end{equation*}
where $A_{\nu} = \sum_{\pm} A_{\nu, \pm}$, $\phi = \sum_{\pm} \phi_{\pm}$. By \eqref{eq:CSH:wave:id}, the homogeneous parts are given in terms of $(a_{\mu}, f, g)$ by
\begin{equation*}
\left\{
\begin{aligned}
	& \Ahom_{0, \pm} (t,x) = \frac{1}{2} e^{\pm(-i) t \abs{\nb}} \bb( a_{0} \pm \frac{\rd^{\ell} }{i \abs{\nb}} a_{\ell} \bb)(x), \\
	& \Ahom_{j, \pm}  (t,x) = \frac{1}{2} e^{\pm(-i) t \abs{\nb}} \bb( a_{j} \mp \frac{\rd_{j}}{i \abs{\nb}} a_{0} \bb)(x), \\
	& \phihom_{\pm} (t,x) = \frac{1}{2} e^{\pm (-i) t \brk{\nb}} \bb( f \pm \frac{1}{i \brk{\nb}} g\bb)(x).
\end{aligned}
\right.
\end{equation*}

With these equations, we may now set up a Picard iteration scheme in the space
\begin{equation*}
(A_{\nu, +}, A_{\nu, -}, \phi_{+}, \phi_{-}) \in X^{s,b}_{+} \times X^{s,b}_{-} \times X^{s+1/2,b}_{+} \times X^{s+1/2,b}_{-}
\end{equation*}
as in \S \ref{subsec:CSD:setup}. Using Lemma \ref{lem:XsbHom}, the homogeneous parts can be treated as before. By the same reduction as sketched in \S \ref{subsec:CSD:setup}, the convergence of the Picard iteration scheme (and thus Theorem \ref{thm:lwp4CSH} itself) would be a consequence of the following global estimates on $\bbR^{1+2}$ for Schwartz inputs.
\begin{align} 
\nrm{\eps_{\mu \nu \lmb} \mR^{\nu}_{\pm} (\Im (\overline{\phi_{1}} \rd^{\lmb} \phi_{2}))}_{X^{s,b-1+\eps_{0}}_{\pm}}
	\aleq & \sum_{\pm_{1}, \pm_{2}} \nrm{\phi_{1, \pm_{1}}}_{X^{s+1/2,b}_{\pm_{1}}} \nrm{\phi_{2, \pm_{2}}}_{X^{s+1/2,b}_{\pm_{2}}} \label{eq:CSH:est4N:1}, \\
\nrm{\eps_{\mu \nu \lmb} \mR^{\nu}_{\pm} (\Im (\overline{\phi_{1}} A^{\lmb} \phi_{2}))}_{X^{s,b-1+\eps_{0}}_{\pm}}
	\aleq & \sum_{\pm_{1}, \pm_{2}, \pm_{3}} \nrm{\phi_{1, \pm_{1}}}_{X^{s+1/2,b}_{\pm_{1}}} \nrm{\phi_{2, \pm_{2}}}_{X^{s+1/2,b}_{\pm_{2}}} 
						\nrm{\bfA_{\pm_{3}}}_{X^{s,b}_{\pm_{3}}} \label{eq:CSH:est4N:2}, \\
\nrm{\phi}_{X^{s-1/2,b-1+\eps_{0}}_{\pm}} 
	\aleq & \sum_{\pm_{1}} \nrm{\phi_{\pm_{1}}}_{X^{s+1/2, b}_{\pm}} \label{eq:CSH:est4M:0}, \\
\nrm{A^{\mu} \rd_{\mu} \phi}_{X^{s-1/2,b-1+\eps_{0}}_{\pm}} 
	\aleq & \sum_{\pm_{1}, \pm_{2}} \nrm{\bfA_{\pm_{1}}}_{X^{s, b}_{\pm_{1}}} \nrm{\phi_{\pm_{2}}}_{X^{s+1/2,b}_{\pm_{2}}}  \label{eq:CSH:est4M:1} , \\
\nrm{A^{\mu} A'_{\mu} \phi}_{X^{s-1/2, b-1+\eps_{0}}_{\pm}}
	\aleq & \sum_{\pm_{1}, \pm_{2}, \pm_{3}} \nrm{\bfA_{\pm_{1}}}_{X^{s,b}_{\pm_{1}}} 
						\nrm{\bfA'_{\pm_{2}}}_{X^{s,b}_{\pm_{2}}} \nrm{\phi_{\pm_{3}}}_{X^{s+1/2,b}_{\pm_{3}}} , \label{eq:CSH:est4M:2}
\end{align}
where we remind the reader that $\nrm{\bfA_{\pm}}_{X^{s,b}_{\pm}}$ is a shorthand for $\sum_{\nu=0,1,2} \nrm{A_{\nu, \pm}}_{X^{s,b}_{\pm}}$.

\subsection{Null structure of the Chern-Simons-Higgs equations in the Lorenz gauge} \label{subsec:CSH:nullform}
In this subsection, we will reveal the null structure of the quadratic nonlinearities of \eqref{eq:CSH:wave}, essentially as in \cite{Selberg:2012vb}, but cast in the language of the present paper.

In order to imitate the arguments in \S \ref{subsec:CSD:nullform}, we will use the formula
\begin{equation} \label{eq:CSH:rd2mR}
	\rd_{\mu} \phi = \sum_{\pm} (\pm i) \mR_{\pm, \mu} \abs{\nb} \phi_{\pm} - \sum_{\pm} (\pm i) \err_{\mu} \phi_{\pm},
\end{equation}
where the \emph{error operator} $\err_{\mu}$ is defined by $\err_{0} := \brk{\nb} - \abs{\nb}$, $\err_{j} = 0$. The error term $(\pm i) \err_{\mu} \phi_{\pm}$ arises due to our introduction of the (artificial) mass to the wave equation for $\phi$; however, observe that for each $\mu = 0,1,2$ and $\phi \Fgeq 0$, we have
\begin{equation} \label{eq:CSH:est4err}
0 \Fleq \err_{\mu} \phi \Fleq \phi.
\end{equation}

This indicates that $\err_{\mu}$ can essentially be ignored; i.e. the extra error terms should be of lower order and thus benign.

\subsubsection*{Wave equation for $A_{\nu}$}
Consider the estimate \eqref{eq:CSH:est4N:1} for the quadratic nonlinearity of the wave equation for $A_{\nu}$. Decomposing $\phi_{1}$ into half-waves and applying the formula \eqref{eq:CSH:rd2mR} to $\phi_{2}$, this estimate is reduced to the following estimates, for all combination of signs $(\pm_{0}, \pm_{1}, \pm_{2})$:
\begin{align}
	\nrm{\eps_{\mu \nu \lmb} \mR^{\nu}_{\pm_{0}} (\Im (\overline{\phi_{1,\pm_{1}}} \mR^{\lmb}_{\pm_{2}} \abs{\nb} \phi_{2, \pm_{2}}))}_{X^{s,b-1+\eps_{0}}_{\pm_{0}}}
	\aleq & \nrm{\phi_{1, \pm_{1}}}_{X^{s+1/2,b}_{\pm_{1}}} \nrm{\phi_{2, \pm_{2}}}_{X^{s+1/2,b}_{\pm_{2}}} \label{eq:CSH:est4N:1:1}, \\
	\nrm{\eps_{\mu \nu \lmb} \mR^{\nu}_{\pm_{0}} (\Im (\overline{\phi_{1, \pm_{1}}} \err^{\lmb} \phi_{2, \pm_{2}}))}_{X^{s,b-1+\eps_{0}}_{\pm_{0}}}
	\aleq & \nrm{\phi_{1, \pm_{1}}}_{X^{s+1/2,b}_{\pm_{1}}} \nrm{\phi_{2, \pm_{2}}}_{X^{s+1/2,b}_{\pm_{2}}}. \label{eq:CSH:est4N:1:2}
\end{align}
By duality and self-adjointness of $\mR^{\nu}_{\pm_{0}}$, \eqref{eq:CSH:est4N:1:1} follows from
\begin{equation} \label{eq:CSH:nf4N}
\begin{aligned}
	& \abs{\int \eps_{\mu \nu \lmb} \Im(\overline{\phi_{1, \pm_{1}}} \mR^{\lmb}_{\pm_{2}} \phi_{2,\pm_{2}}) \mR^{\nu}_{\pm_{0}} \phi_{0, \pm_{0}} \, \ud t \ud x} \\
	&\quad \aleq \nrm{\phi_{0, \pm_{0}}}_{X^{-s, 1-b+\eps_{0}}_{\pm_{0}}}\nrm{\phi_{1, \pm_{1}}}_{X^{s+1/2,b}_{\pm_{1}}} \nrm{\phi_{2, \pm_{2}}}_{X^{s-1/2,b}_{\pm_{2}}}
\end{aligned}
\end{equation}
for all combinations of signs and Schwartz $\phi_{1, \pm_{1}}, \phi_{2, \pm_{2}}, \phi_{3, \pm_{3}}$. As in \eqref{eq:CSD:nf4N2}, the $\nf^{\lmb \nu}_{\pm_{2}, \pm_{0}}$ null form between $\phi_{2}, \phi_{0}$ is evident.

\subsubsection*{Wave equation for $\phi$}
 Decomposing $A_{\mu}$ into half-waves and applying the formula \eqref{eq:CSH:rd2mR} to $\phi$, \eqref{eq:CSH:est4M:1} is reduced to showing
\begin{align}
\nrm{A^{\mu}_{\pm_{1}} \mR_{\pm_{2}, \mu} \abs{\nb} \phi_{\pm_{2}}}_{X^{s-1/2,b-1+\eps_{0}}_{\pm_{0}}} 
	\aleq &  \nrm{\bfA_{\pm_{1}}}_{X^{s, b}_{\pm_{1}}} \nrm{\phi_{\pm_{2}}}_{X^{s+1/2,b}_{\pm_{2}}}  \label{eq:CSH:est4M:1:1}, \\
\nrm{A^{\mu}_{\pm_{1}} \err_{\mu} \phi_{\pm_{2}}}_{X^{s-1/2,b-1+\eps_{0}}_{\pm_{0}}} 
	\aleq & \nrm{\bfA_{\pm_{1}}}_{X^{s, b}_{\pm_{1}}} \nrm{\phi_{\pm_{2}}}_{X^{s+1/2,b}_{\pm_{2}}}  \label{eq:CSH:est4M:1:2}
\end{align}
for all combinations of signs $(\pm_{0}, \pm_{1}, \pm_{2})$ and Schwartz $A_{\nu, \pm_{1}}, \phi_{\pm_{2}}$. Proceeding as in the last part of \S \ref{subsec:CSD:nullform}, \eqref{eq:CSH:est4M:1:1} follows once we prove the null form estimates
\begin{align} 
	\nrm{\nf^{12}_{\pm_{1}, \pm_{2}}(B_{\pm_{1}}, \phi_{\pm_{2}})}_{X^{s-1/2, b-1+\eps_{0}}_{\pm_{0}}} 
	\aleq &  \nrm{B_{\pm_{1}}}_{X^{s,b}_{\pm_{1}}} \nrm{\phi_{\pm_{2}}}_{X^{s-1/2,b}_{\pm_{2}}}   \label{eq:CSH:nf4M:1}, \\
	\nrm{\nf^{0}_{\pm_{1}, \pm_{2}} (A_{0, \pm_{1}}, \phi_{\pm_{2}}) }_{X^{s-1/2, b-1+\eps_{0}}_{\pm_{0}}} 
	\aleq &  \nrm{A_{0, \pm_{1}}}_{X^{s,b}_{\pm_{1}}} \nrm{\phi_{\pm_{2}}}_{X^{s-1/2,b}_{\pm_{2}}}  \label{eq:CSH:nf4M:2}
\end{align}
for all combinations of signs $(\pm_{0}, \pm_{1}, \pm_{2})$ and Schwartz inputs.
\subsection{Proof of the global estimates}
In this subsection, we will finally find conditions on $(s, b)$ required to establish \eqref{eq:CSH:est4N:1}--\eqref{eq:CSH:est4M:2}.

\subsubsection*{Linear estimate}
Let us establish the linear estimate \eqref{eq:CSH:est4M:0}. Given that
\begin{equation} \label{eq:CSH:cond4sb:1}
	0 < b < 1,
\end{equation}
and $\eps_{0}$ is small enough, we have for each sign $\pm$
\begin{equation*}
	\nrm{\phi}_{X^{s-1/2, b-1+\eps_{0}}_{\pm}} \leq \nrm{\brk{\nb}^{s-1/2} \phi}_{L^{2}_{t,x}} \leq \sum_{\pm_{1}} \nrm{\phi_{\pm_{1}}}_{X^{s+1/2,b}_{\pm_{1}}}.
\end{equation*}

\subsubsection*{Bilinear estimates}
Here we shall prove \eqref{eq:CSH:est4N:1}, \eqref{eq:CSH:est4M:1}, which have been reduced in \S \ref{subsec:CSH:nullform} to \eqref{eq:CSH:est4N:1:2}, \eqref{eq:CSH:nf4N}, \eqref{eq:CSH:est4M:1:2}, \eqref{eq:CSH:nf4M:1} and \eqref{eq:CSH:nf4M:2}. As $X^{s,b}_{\pm} \subset H^{s,b} \subset H^{s,-b} \subset X^{s,-b}_{\pm}$ for $s \in \bbR$ and $b \geq 0$, it suffices to establish the analogous estimates with $X^{s,b}_{\pm}$ replaced by $H^{s,b}$, provided that \eqref{eq:CSH:cond4sb:1} holds and $\eps_{0} > 0$ is sufficiently small.

Let us first get the estimates \eqref{eq:CSH:est4N:1:2}, \eqref{eq:CSH:est4M:1:2} (which involve the error operator $\calE_{\mu}$) out of the way. As all the norms involved depend only on the size of the space-time Fourier transform, we may throw away $\mR^{\nu}_{\pm_{0}}$, $\err_{\mu}$; then it suffices to establish
\begin{equation} \label{eq:CSH:est4err}
	\left\{
	\begin{aligned}
		&\nrm{\phi_{1} \phi_{2}}_{H^{s, 1-b}} \aleq \nrm{\phi_{1}}_{H^{s+1/2, b}} \nrm{\phi_{2}}_{H^{s+1/2, b}}, \\
		&\nrm{\phi_{1} \phi_{2}}_{H^{s-1/2, 1-b}} \aleq \nrm{\phi_{1}}_{H^{s, b}} \nrm{\phi_{2}}_{H^{s+1/2, b}} 
	\end{aligned}
	\right.
\end{equation}
for Schwartz $\phi_{1}, \phi_{2}$. Assuming
\begin{equation} \label{eq:CSH:cond4sb:2}
	1/2 < b < 1, \quad s \geq 0,
\end{equation}
and $\eps_{0}$ is sufficiently small, we can readily verify that \eqref{eq:CSH:est4err} holds by Theorem \ref{thm:HsbProd}.

Next, let us turn to the null form estimates \eqref{eq:CSH:nf4N}, \eqref{eq:CSH:nf4M:1} and \eqref{eq:CSH:nf4M:2}. Applying Lemma \ref{lem:nullform}, we see that all of these estimates would follow if we prove
\begin{align} 
	& \nrm{\mathfrak{B}^{1}_{\pm_{1}, \pm_{2}}(\phi_{1}, \phi_{2})}_{H^{s-1/2, b-1+\eps_{0}}} \aleq \nrm{\phi_{1}}_{H^{s,b}} \nrm{\phi_{2}}_{H^{s-1/2, b}} \label{eq:CSH:nfEst:1}, \\
	& \nrm{\mathfrak{B}^{1}_{\pm_{1}, \pm_{2}}(\phi_{1}, \phi_{2})}_{H^{-s-1/2,-b}} \aleq \nrm{\phi_{1}}_{H^{-s,1-b+\eps_{0}}} \nrm{\phi_{2}}_{H^{s-1/2,b}} \label{eq:CSH:nfEst:2}
\end{align}
for Schwartz $\phi_{1}, \phi_{2}$. By Lemma \ref{lem:est4angle}, to prove \eqref{eq:CSH:nfEst:1}, it suffices to show
\begin{equation} \label{eq:CSH:nfEst:1:1}
\left\{
\begin{aligned}
	& \nrm{\phi_{1} \phi_{2}}_{H^{s-1/2,b-1/2+\eps_{0}}} \aleq \nrm{\phi_{1}}_{H^{s+1/2,b}} \nrm{\phi_{2}}_{H^{s-1/2,b}}, \\
	& \nrm{\phi_{1} \phi_{2}}_{H^{s-1/2,b-1/2+\eps_{0}}} \aleq \nrm{\phi_{1}}_{H^{s,b}} \nrm{\phi_{2}}_{H^{s,b}}, \\
	& \nrm{\phi_{1} \phi_{2}}_{H^{s-1/2,b-1+\eps_{0}}} \aleq \nrm{\phi_{1}}_{H^{s+1/2,b-1/2}} \nrm{\phi_{2}}_{H^{s-1/2,b}}, \\
	& \nrm{\phi_{1} \phi_{2}}_{H^{s-1/2,b-1+\eps_{0}}} \aleq \nrm{\phi_{1}}_{H^{s,b-1/2}} \nrm{\phi_{2}}_{H^{s,b}}, \\
	& \nrm{\phi_{1} \phi_{2}}_{H^{s-1/2,b-1+\eps_{0}}} \aleq \nrm{\phi_{1}}_{H^{s+1/2,b}} \nrm{\phi_{2}}_{H^{s-1/2,b-1/2}},
\end{aligned}
\right.
\end{equation}
and for \eqref{eq:CSH:nfEst:2}, it is enough to prove
\begin{equation} \label{eq:CSH:nfEst:2:1}
\left\{
\begin{aligned}
	& \nrm{\phi_{1} \phi_{2}}_{H^{-s-1/2,-b+1/2}} \aleq \nrm{\phi_{1}}_{H^{-s+1/2,1-b+\eps_{0}}} \nrm{\phi_{2}}_{H^{s-1/2,b}}, \\
	& \nrm{\phi_{1} \phi_{2}}_{H^{-s-1/2,-b+1/2}} \aleq \nrm{\phi_{1}}_{H^{-s,1-b+\eps_{0}}} \nrm{\phi_{2}}_{H^{s,b}}, \\
	& \nrm{\phi_{1} \phi_{2}}_{H^{-s-1/2,-b}} \aleq \nrm{\phi_{1}}_{H^{-s+1/2,1/2-b+\eps_{0}}} \nrm{\phi_{2}}_{H^{s-1/2,b}}, \\
	& \nrm{\phi_{1} \phi_{2}}_{H^{-s-1/2,-b}} \aleq \nrm{\phi_{1}}_{H^{-s,1/2-b+\eps_{0}}} \nrm{\phi_{2}}_{H^{s,b}}, \\
	& \nrm{\phi_{1} \phi_{2}}_{H^{-s-1/2,-b}} \aleq \nrm{\phi_{1}}_{H^{-s+1/2,1-b+\eps_{0}}} \nrm{\phi_{2}}_{H^{s-1/2,b-1/2}}, \\
	& \nrm{\phi_{1} \phi_{2}}_{H^{-s-1/2,-b}} \aleq \nrm{\phi_{1}}_{H^{-s,1-b+\eps_{0}}} \nrm{\phi_{2}}_{H^{s,b-1/2}}.
\end{aligned}
\right.
\end{equation}
By Theorem \ref{thm:HsbProd}, we can verify that \eqref{eq:CSH:nfEst:1:1} and \eqref{eq:CSH:nfEst:2:1} hold provided that
\begin{equation} \label{eq:CSH:cond4sb:3}
	1/2 < b < 1, \qquad s > \max \bb\{ \frac{1}{4},\, b - \frac{1}{2},\, 1-b, \,\frac{b}{3} \bb\},
\end{equation}
and $\eps_{0}$ is sufficiently small.

\subsubsection*{Cubic estimates}
In order to prove the cubic estimates \eqref{eq:CSH:est4N:2} and \eqref{eq:CSH:est4M:2}, it is enough to show that (again throwing away $\mR^{\nu}_{\pm}$)
\begin{align} 
	& \nrm{\phi_{1} \phi_{2} \phi_{3}}_{H^{s,b-1+\eps_{0}}} \aleq \nrm{\phi_{1}}_{H^{s+1/2,b}} \nrm{\phi_{2}}_{H^{s+1/2,b}} \nrm{\phi_{3}}_{H^{s,b}} \label{eq:CSH:cubicEst:1}, \\
	& \nrm{\phi_{1} \phi_{2} \phi_{3}}_{H^{s-1/2,b-1+\eps_{0}}} \aleq \nrm{\phi_{1}}_{H^{s+1/2,b}} \nrm{\phi_{2}}_{H^{s,b}} \nrm{\phi_{3}}_{H^{s,b}} \label{eq:CSH:cubicEst:2}
\end{align}
hold for Schwartz $\phi_{1}, \phi_{2}, \phi_{3}$.
Assuming that \eqref{eq:CSH:cond4sb:3} holds and $\eps_{0}$ is small enough, then by Theorem \ref{thm:HsbProd}, we prove \eqref{eq:CSH:cubicEst:1} and \eqref{eq:CSH:cubicEst:2} as follows.
\begin{align*}
	&\nrm{\phi_{1} \phi_{2} \phi_{3}}_{H^{s,b-1+\eps_{0}}} 
	\aleq \nrm{\phi_{1}}_{H^{s+1/2,b}} \nrm{\phi_{2} \phi_{3}}_{H^{s,0}} 
	\aleq \nrm{\phi_{1}}_{H^{s+1/2,b}} \nrm{\phi_{2}}_{H^{s+1/2,b}} \nrm{\phi_{3}}_{H^{s,b}}, \\ 
	&\nrm{\phi_{1} \phi_{2} \phi_{3}}_{H^{s-1/2,b-1+\eps_{0}}} 
	\aleq \nrm{\phi_{1} \phi_{2}}_{H^{s, 0}} \nrm{\phi_{3}}_{H^{s,b}} 
	\aleq \nrm{\phi_{1}}_{H^{s,b}} \nrm{\phi_{2}}_{H^{s+1/2,b}} \nrm{\phi_{3}}_{H^{s,b}}. 
\end{align*}

\subsubsection*{Conclusion}
Observe that for $s = 1/4+\eps$ and $b = 3/4-2\eps$ for an arbitrary $\eps$ satisfying $0 < \eps \ll 1$, and choosing $\eps_{0}$ small enough, the conditions \eqref{eq:CSH:cond4sb:1}, \eqref{eq:CSH:cond4sb:2} and \eqref{eq:CSH:cond4sb:3} are satisfied. This proves Theorem \ref{thm:lwp4CSH}.
\\
\ \\
{\bf Acknowledgements.}\\
H. Huh was supported by Basic Science Research Program
through the National Research Foundation of Korea(NRF) funded by
the Ministry of Education, Science and Technology (2011-0015866) and also partially supported
by the TJ Park Junior Faculty Fellowship. 
S.-J. Oh would like to thank KIAS for hospitality where a part of the research was conducted. S.-J. Oh was supported by the Samsung Scholarship.

\bibliographystyle{amsplain}

\end{document}